\newtheorem{theorem}{Theorem}[section]
\newtheorem{lemma}[theorem]{Lemma}
\newtheorem{corollary}[theorem]{Corollary}
\theoremstyle{definition}
\theoremstyle{remark}
\newtheorem{remark}[theorem]{\bf{Remark}}
\numberwithin{equation}{section}
\begin{document}

%
%
%
%
%
%
%
%
%

\title[Inequality and equality conditions for numerical radius ]{Development of inequality and characterization of equality conditions for the numerical radius }


\author[P. Bhunia]{Pintu Bhunia} 
\address{
Department of Mathematics,
Jadavpur University,
Kolkata 700032,
West Bengal, India}
\email{pintubhunia5206@gmail.com}


\author[K. Paul]{Kallol Paul}
\address{%
Department of Mathematics,
Jadavpur University,
Kolkata 700032,
West Bengal, India}
\email{kalloldada@gmail.com}
\subjclass{47A12; 47A30; 15A60}

\keywords{Numerical radius; Operator norm; Bounded linear operator}


\begin{abstract}
Let $A$ be a bounded linear operator on a complex Hilbert space and $\Re(A)$ ( $\Im(A)$ ) denote the real part (imaginary part)  of A.  Among other refinements of the lower bounds for the numerical radius of $A$, we prove that  
\begin{eqnarray*}
w(A)&\geq &\frac{1}{2} \left \|A \right\| +  \frac{  1}{2} \mid \|\Re(A)\|-\|\Im(A)\|\mid,\,\,\mbox{and}\\
w^2(A)&\geq& \frac{1}{4} \left \|A^*A+AA^* \right\| +  \frac{1}{2}\mid \|\Re(A)\|^2-\|\Im(A)\|^2 \mid,
\end{eqnarray*}
where $w(A)$ is the numerical radius of the operator $A$. 
We study the equality conditions for $w(A)=\frac{1}{2}\sqrt{\|A^*A+AA^*\|}$ 
and  prove that $w(A)=\frac{1}{2}\sqrt{\|A^*A+AA^*\|} $  if and only if the numerical range of $A$ is a circular disk with center at the origin and radius $\frac{1}{2}\sqrt{\|A^*A+AA^*\|} $.
 We also obtain upper bounds for the numerical radius of commutators of operators which improve on the existing ones.

\end{abstract}

\maketitle
\section{Introduction}
\label{intro}
Let $\mathcal{B}(\mathcal{H})$ denote the $C^*$-algebra of all bounded linear operators on a complex Hilbert space $\mathcal{H}$ with inner product $\langle.,.\rangle$. As usual the norm induced by the inner product $\langle.,.\rangle$  is denoted by $ \Vert . \Vert.$ For $A\in \mathcal{B}(\mathcal{H})$, let $W(A)$ denote the numerical range of $A$, which is defined as $W(A)=\{\langle Ax,x\rangle: x\in \mathcal{H},\|x\|=1\}.$
 For $A\in \mathcal{B}(\mathcal{H})$, let $w(A)$ and $\|A\|$ denote the numerical radius and the operator norm of $A$, respectively, defined as,  $w(A)=\sup\{|\langle Ax,x\rangle|: x\in \mathcal{H},\|x\|=1\}$ and $\|A\|=\sup\{\|Ax\|: x\in \mathcal{H},\|x\|=1\}.$ It is easy to verify that $w(.)$ defines a norm on $\mathcal{B}(\mathcal{H})$, which is equivalent to the operator norm $\|.\|$. In fact, for every $A\in \mathcal{B}(\mathcal{H})$, we have the following inequality
\begin{eqnarray}\label{equivalent norm}
\frac{1}{2}\|A\|\leq w(A)\leq \|A\|.
\end{eqnarray}
The Crawford number of $A$, denoted by $c(A),$  is another important numerical constant associated with the numerical range and  is defined as $c(A)=\inf\{|\langle Ax,x\rangle|: x\in \mathcal{H},\|x\|=1\}.$
The adjoint of an operator $A$ is denoted by $A^*$. Clearly $w(A)=w(A^*)$ and $c(A)=c(A^*)$. 
For   $A \in \mathcal{B}(\mathcal{H}),$   the real part and imaginary part of $A,$ denoted as     ${\Re}(A)$ and ${\Im (A)},$ are defined respectively as,   ${\Re (A)} =\frac{A+A^*}{2}$ and ${\Im}(A)=\frac{A-A^*}{2{\rm i}}.$  Thus, $A= {\Re}(A)+ \rm i {\Im}(A)$ is the Cartesian decomposition of $A$. It is well known that, for $A\in \mathcal{B}(\mathcal{H})$,  $w(A)=\sup_{\theta\in \mathbb{R}}\|\Re(e^{\rm i \theta }A)\|=\sup_{\theta\in \mathbb{R}}\|\Im(e^{\rm i \theta}A)\|$. Over the years various refinements of (\ref{equivalent norm}) have been obtained, we refer the interested readers to see \cite{aok,P8,P14,BP,K05,K03,TY} and the references therein.

In this paper, we obtain lower bounds for the numerical radius of bounded linear operators which refine the well known lower bound $w(A)\geq \frac{\|A\|}{2}$ and the bound $w^2(A)\geq \frac{1}{4}\|A^*A+AA^*\|,$  obtained by Kittaneh \cite[Th. 1]{K05}. We also present the equivalent conditions for  the equality $w(A)= \frac{\|A\|}{2}$ as well as $w^2(A)=\frac{1}{4}\|A^*A+AA^*\|.$ 
Further, applying the lower bounds obtained here, we obtain upper bounds for the numerical radius of commutators of bounded linear operators, which refine the existing ones \cite{FH,HK}.

\section{Main results}\label{sec1}

We begin with by noting  an elementary equality of real numbers, $\max\{a,b\}=\frac{a+b}{2}+\frac{|a-b|}{2}$ for all $a,b\in \mathbb{R}$. By using this maximum function we obtain the following lower bound for the numerical radius of bounded linear operators.

\begin{theorem}\label{th3}
	Let $A \in \mathcal{B}(\mathcal{H})$, then 
	\begin{eqnarray*}
		w(A)&\geq& \frac{ \left \|A \right\|}{2} +  \frac{   \big|~~\|\Re(A)\|-\|\Im(A)\| ~~ \big|}{2}.
	\end{eqnarray*}
\end{theorem}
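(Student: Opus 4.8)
The plan is to assemble the claimed inequality from two well-known facts: the supremum representations $w(A)=\sup_{\theta\in\mathbb R}\|\Re(e^{\mathrm i\theta}A)\|=\sup_{\theta\in\mathbb R}\|\Im(e^{\mathrm i\theta}A)\|$ recalled in the introduction, and the elementary identity $\max\{a,b\}=\tfrac{a+b}{2}+\tfrac{|a-b|}{2}$ noted immediately before the statement.

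First I would extract two crude lower bounds from the supremum formulas. Taking $\theta=0$ in $w(A)=\sup_{\theta}\|\Re(e^{\mathrm i\theta}A)\|$ gives $w(A)\geq\|\Re(A)\|$, and taking $\theta=0$ in $w(A)=\sup_{\theta}\|\Im(e^{\mathrm i\theta}A)\|$ gives $w(A)\geq\|\Im(A)\|$ (equivalently, $\theta=\pi/2$ in the first formula, using $\Re(\mathrm iA)=-\Im(A)$). Hence $w(A)\geq\max\{\|\Re(A)\|,\|\Im(A)\|\}$. Applying the maximum identity with $a=\|\Re(A)\|$ and $b=\|\Im(A)\|$ rewrites this as
$$w(A)\ \geq\ \frac{\|\Re(A)\|+\|\Im(A)\|}{2}\ +\ \frac{\big|\,\|\Re(A)\|-\|\Im(A)\|\,\big|}{2}.$$

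To finish, I would bound the first term below using the Cartesian decomposition: since $A=\Re(A)+\mathrm i\Im(A)$, the triangle inequality gives $\|A\|\leq\|\Re(A)\|+\|\Im(A)\|$, so $\tfrac12\big(\|\Re(A)\|+\|\Im(A)\|\big)\geq\tfrac12\|A\|$, and substituting yields the asserted inequality $w(A)\geq\tfrac12\|A\|+\tfrac12\big|\,\|\Re(A)\|-\|\Im(A)\|\,\big|$.

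The argument is short and I do not anticipate a genuine obstacle; the only step that merits a word of justification is the domination of both $\|\Re(A)\|$ and $\|\Im(A)\|$ by $w(A)$, which is exactly what the two supremum representations provide. The more delicate companion question — and presumably the reason the theorem is stated this way — is the analysis of when equality is attained, which would require simultaneously $\|A\|=\|\Re(A)\|+\|\Im(A)\|$ and $w(A)=\max\{\|\Re(A)\|,\|\Im(A)\|\}$; that refinement is where the real work lies and would be taken up separately.
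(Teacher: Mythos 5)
Your proposal is correct and follows essentially the same route as the paper: both reduce the claim to $w(A)\geq\max\{\|\Re(A)\|,\|\Im(A)\|\}$, apply the identity $\max\{a,b\}=\tfrac{a+b}{2}+\tfrac{|a-b|}{2}$, and finish with the triangle inequality $\|A\|\leq\|\Re(A)\|+\|\Im(A)\|$. The only (immaterial) difference is that you justify $w(A)\geq\|\Re(A)\|$ and $w(A)\geq\|\Im(A)\|$ via the supremum formulas $w(A)=\sup_{\theta}\|\Re(e^{\mathrm i\theta}A)\|=\sup_{\theta}\|\Im(e^{\mathrm i\theta}A)\|$, whereas the paper derives them from the pointwise identity $|\langle Ax,x\rangle|^2=|\langle \Re(A)x,x\rangle|^2+|\langle \Im(A)x,x\rangle|^2$.
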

\begin{proof}
Let $x$ be an unit vector in $\mathcal{H}$. Then
it follows from the Cartesian decomposition of $A$ that $|\langle Ax,x\rangle|^2=|\langle \Re(A)x,x\rangle|^2+|\langle \Im(A)x,x\rangle|^2.$
This implies $w(A) \geq \| \Re(A) \|$ and $w(A) \geq \| \Im(A) \|$.
Thus, we have 
\begin{eqnarray*}
w(A)&\geq& \max\{ \| \Re(A) \|,  \| \Im(A) \|   \}\\
&=& \frac{ \| \Re(A) \|+ \| \Im(A) \|}{2}+ \frac{\big|~~\|\Re(A)\|-\|\Im(A)\| ~~ \big|}{2}\\
&\geq& \frac{ \| \Re(A) +\rm i \Im(A) \|}{2}+ \frac{\big|~~\|\Re(A)\|-\|\Im(A)\| ~~ \big|}{2}\\
&=&  \frac{ \left \|A \right\|}{2} +  \frac{   \big|~~\|\Re(A)\|-\|\Im(A)\| ~~ \big|}{2},
\end{eqnarray*}	
as desired.
	
\end{proof}

\begin{remark}
	(i) We note that if $A$ is Hermitian or skew Hermitian operator then the  inequality in Theorem \ref{th3} becomes an equality.\\
(ii) Clearly, the inequality in Theorem \ref{th3} is  stronger than the first inequality in (\ref{equivalent norm}) when $\|\Re(A)\|\neq\|\Im(A)\|$.
\end{remark}

As a consequence of  Theorem \ref{th3} we prove the following corollary.

\begin{corollary}\label{c1}
	Let $A \in \mathcal{B}(\mathcal{H})$. If $w(A)=\frac{\|A\|}{2}$, then
	$\|\Re(A)\|=\|\Im(A)\|=\frac{\|A\|}{2}.$
\end{corollary}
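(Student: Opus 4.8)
The plan is to read the conclusion straight off the inequality in Theorem~\ref{th3}. Suppose $w(A)=\frac{\|A\|}{2}$. By Theorem~\ref{th3} we have
\[
\frac{\|A\|}{2} = w(A) \geq \frac{\|A\|}{2} + \frac{\big|\,\|\Re(A)\|-\|\Im(A)\|\,\big|}{2},
\]
so the nonnegative term $\frac{1}{2}\big|\,\|\Re(A)\|-\|\Im(A)\|\,\big|$ must vanish, forcing $\|\Re(A)\|=\|\Im(A)\|$.

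Next I would extract the common value. From the chain of inequalities in the proof of Theorem~\ref{th3}, $w(A)\geq \max\{\|\Re(A)\|,\|\Im(A)\|\} = \|\Re(A)\| = \|\Im(A)\|$ (using the equality just established), and also $w(A)\geq \frac{\|A\|}{2}$ while, by hypothesis, $w(A)=\frac{\|A\|}{2}$. Combining, $\|\Re(A)\| = \|\Im(A)\| \leq w(A) = \frac{\|A\|}{2}$. For the reverse inequality, note $\|A\| = \|\Re(A)+\mathrm{i}\,\Im(A)\| \leq \|\Re(A)\|+\|\Im(A)\| = 2\|\Re(A)\|$, hence $\|\Re(A)\| \geq \frac{\|A\|}{2}$, and likewise for $\|\Im(A)\|$. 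Therefore $\|\Re(A)\| = \|\Im(A)\| = \frac{\|A\|}{2}$, as claimed.

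There is essentially no obstacle here: the corollary is a direct specialization of Theorem~\ref{th3}, and the only mildly nontrivial ingredient is the triangle-inequality estimate $\|A\|\leq \|\Re(A)\|+\|\Im(A)\|$ (equivalently $\|\Re(A)+\mathrm{i}\,\Im(A)\|\leq\|\Re(A)\|+\|\Im(A)\|$), which already appeared inside the proof of Theorem~\ref{th3}. One could alternatively phrase the whole argument by observing that the hypothesis $w(A)=\frac{\|A\|}{2}$ forces \emph{every} inequality in the displayed chain of Theorem~\ref{th3} to be an equality, and then reading off both $\|\Re(A)\|=\|\Im(A)\|$ and the value $\frac{\|A\|}{2}$ simultaneously; I would likely present it in the short two-line form above since it is cleaner.
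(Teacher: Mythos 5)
Your proposal is correct and follows essentially the same route as the paper: first use Theorem~\ref{th3} to force $\|\Re(A)\|=\|\Im(A)\|$, then sandwich the common value via $\|\Re(A)\|\leq w(A)=\frac{\|A\|}{2}$ together with the triangle inequality $\|A\|\leq\|\Re(A)\|+\|\Im(A)\|$. The paper merely writes this second step as a single chain of inequalities rather than two separate estimates.
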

\begin{proof}
	From Theorem \ref{th3}, we have 
	$	w(A)\geq \frac{ \left \|A \right\|}{2} +  \frac{   \left|~~\|\Re(A)\|-\|\Im(A)\| ~~ \right|}{2}\geq \frac{ \left \|A \right\|}{2}.$
This implies that if $w(A)=\frac{ \left \|A \right\|}{2}$, then $\|\Re(A)\|=\|\Im(A)\|$. Also
\[\|\Re(A)\|\leq w(A)=\frac{ \left \|A \right\|}{2}=\frac{ \left \|\Re(A)+\rm i \Im(A)\right\|}{2}\leq \frac{\|\Re(A)\|+\|\Im(A)\|}{2}    =\| \Re(A)\|\]
and so  $\|\Re(A)\|=\|\Im(A)\|=\frac{\|A\|}{2}.$
	\end{proof}
 
Note that the converse of Corollary \ref{c1} is not true, in general.  We next  concentrate our study on an  equivalent condition for $w(A)=\frac{\|A\|}{2}$.

\begin{theorem}\label{theorem1}
Let $A \in \mathcal{B}(\mathcal{H})$. Then the following are equivalent. \\
(i) $w(A)=\frac{\|A\|}{2}.$ \\
(ii) $\|\Re(e^{\rm i \theta }A)\|=\|\Im(e^{\rm i \theta}A)\|=\frac{\|A\|}{2}$ for all $\theta \in \mathbb{R}.$
\end{theorem}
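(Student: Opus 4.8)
The plan is to prove the equivalence (ii) $\Rightarrow$ (i) trivially and to extract (i) $\Rightarrow$ (ii) from a rotation-invariance argument combined with Corollary \ref{c1}. The key observation is that the numerical radius is rotation-invariant: $w(e^{\mathrm i\theta}A)=w(A)$ for every $\theta\in\mathbb R$, while the operator norm is also rotation-invariant, $\|e^{\mathrm i\theta}A\|=\|A\|$. Hence the hypothesis $w(A)=\frac{\|A\|}{2}$ is equivalent to $w(e^{\mathrm i\theta}A)=\frac{\|e^{\mathrm i\theta}A\|}{2}$ holding for one (equivalently, every) $\theta$.

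For the direction (i) $\Rightarrow$ (ii): fix $\theta\in\mathbb R$ and set $B=e^{\mathrm i\theta}A$. Then $w(B)=\frac{\|B\|}{2}$, so applying Corollary \ref{c1} to $B$ gives $\|\Re(B)\|=\|\Im(B)\|=\frac{\|B\|}{2}=\frac{\|A\|}{2}$, which is exactly statement (ii) for this $\theta$. Since $\theta$ was arbitrary, (ii) follows. For the converse (ii) $\Rightarrow$ (i): take $\theta=0$ in (ii) to get $\|\Re(A)\|=\|\Im(A)\|=\frac{\|A\|}{2}$; but recall from the introduction that $w(A)=\sup_{\theta\in\mathbb R}\|\Re(e^{\mathrm i\theta}A)\|$, and by (ii) this supremum equals $\frac{\|A\|}{2}$, hence $w(A)\le\frac{\|A\|}{2}$; combined with the standard bound $w(A)\ge\frac{\|A\|}{2}$ from (\ref{equivalent norm}) we conclude $w(A)=\frac{\|A\|}{2}$.

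I expect no serious obstacle here: the whole argument rests on rotation-invariance of $w$ and $\|\cdot\|$ together with the already-established Corollary \ref{c1} and the identity $w(A)=\sup_{\theta}\|\Re(e^{\mathrm i\theta}A)\|$ recalled in the introduction. The only point requiring a moment's care is making sure Corollary \ref{c1} is genuinely applicable to the rotated operator $B=e^{\mathrm i\theta}A$ rather than only to $A$ itself — but that is immediate since Corollary \ref{c1} holds for an arbitrary operator in $\mathcal B(\mathcal H)$. One should also note explicitly that $\Re(e^{\mathrm i\theta}A)$ and $\Im(e^{\mathrm i\theta}A)$ are precisely $\Re(B)$ and $\Im(B)$, so that the conclusion of Corollary \ref{c1} applied to $B$ is verbatim statement (ii).
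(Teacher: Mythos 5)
Your proposal is correct and follows essentially the same route as the paper: the paper also dismisses (ii) $\Rightarrow$ (i) as trivial and obtains (i) $\Rightarrow$ (ii) by combining the rotation invariance $w(e^{\mathrm i\theta}A)=w(A)$, $\|e^{\mathrm i\theta}A\|=\|A\|$ with an application of Corollary \ref{c1} to the rotated operator. If anything, your write-up is slightly more explicit than the paper's at the two points that deserve care (applying Corollary \ref{c1} to $B=e^{\mathrm i\theta}A$ rather than to $A$, and invoking $w(A)=\sup_{\theta}\|\Re(e^{\mathrm i\theta}A)\|$ for the converse).
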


\begin{proof}
(ii) implies (i) is trivial. We only prove (i) implies (ii). Let  $w(A)=\frac{ \left \|A \right\|}{2}$.
Then from Corollary \ref{c1}, we have
$\|\Re(A)\|=\|\Im(A)\|=\frac{\|A\|}{2}.$ Clearly,  for all $\theta \in \mathbb{R}$, $e^{\rm i \theta}A \in \mathcal{B}(\mathcal{H})$ and $w(e^{\rm i \theta}A)=w(A)$, $\|e^{\rm i \theta}A\|=\|A\|$. Thus, 	$w(A)=\frac{\|A\|}{2}$ implies $\|\Re(e^{\rm i \theta }A)\|=\|\Im(e^{\rm i \theta}A)\|=\frac{\|A\|}{2}$ for all $\theta \in \mathbb{R}.$

\end{proof}

\begin{remark}
Yamazaki \cite[Th. 3.1]{TY} proved that the following are equivalent:\\
		(i) $ w(A)=\frac{\|A\|}{2}. $ \\
		(ii) $ \|\Re(e^{\rm i \theta }A)\|+\|\Im(e^{\rm i \theta}A)\|=\|A\| $ for all $\theta \in \mathbb{R}.$ \\
We note that the Theorem \ref{theorem1} gives an alternate elementary proof of  \cite[Th. 3.1]{TY}.
\end{remark}

We next prove that,  $ \|A\| = \sqrt{\|A^*A+AA^*\|}= \sqrt{\|A^*A-AA^*\|},$  if  $ w(A) = \frac{1}{2} \|A\|.$  To do so we need the following lemma.
\begin{lemma}[\cite{BB}] \label{lemmaA^*B}
Let $A,B\in \mathcal{B}(\mathcal{H})$ be non zero. Then $\|A+B\|=\|A\|+\|B\|$ if and only if $\|A\|\|B\|\in \overline{W(A^*B)}.$ 
\end{lemma}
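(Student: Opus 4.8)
The plan is to characterize the equality case of the triangle inequality $\|A+B\|\le\|A\|+\|B\|$ by testing it against approximating sequences of unit vectors and then translating the resulting inner-product condition into a statement about $\overline{W(A^*B)}$. The central observation I would exploit is that for any unit vector $x$ one has $\langle A^*Bx,x\rangle=\langle Bx,Ax\rangle$, so that $W(A^*B)=\{\langle Bx,Ax\rangle:\|x\|=1\}$; consequently $\|A\|\|B\|\in\overline{W(A^*B)}$ holds precisely when there is a sequence of unit vectors $x_n$ with $\langle Bx_n,Ax_n\rangle\to\|A\|\|B\|$. Throughout I would lean on the elementary identity
\begin{eqnarray*}
\|(A+B)x\|^2=\|Ax\|^2+\|Bx\|^2+2\Re\langle Ax,Bx\rangle,
\end{eqnarray*}
which links the norm of $A+B$ to the inner product $\langle Ax,Bx\rangle=\overline{\langle Bx,Ax\rangle}$.

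For the forward direction, I would assume $\|A+B\|=\|A\|+\|B\|$ and choose unit vectors $x_n$ with $\|(A+B)x_n\|\to\|A\|+\|B\|$. Since $\|(A+B)x_n\|\le\|Ax_n\|+\|Bx_n\|\le\|A\|+\|B\|$ with $\|Ax_n\|\le\|A\|$ and $\|Bx_n\|\le\|B\|$, the convergence to the maximal value forces $\|Ax_n\|\to\|A\|$ and $\|Bx_n\|\to\|B\|$. Feeding these limits into the displayed identity yields $\Re\langle Ax_n,Bx_n\rangle\to\|A\|\|B\|$; combined with the Cauchy--Schwarz bound $|\langle Ax_n,Bx_n\rangle|\le\|Ax_n\|\|Bx_n\|\to\|A\|\|B\|$, this pins the imaginary part to zero and gives $\langle Ax_n,Bx_n\rangle\to\|A\|\|B\|$. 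Taking conjugates, $\langle A^*Bx_n,x_n\rangle\to\|A\|\|B\|$, so the nonnegative real number $\|A\|\|B\|$ lies in $\overline{W(A^*B)}$.

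The converse reverses this chain. Starting from $\|A\|\|B\|\in\overline{W(A^*B)}$, I would pick unit vectors $x_n$ with $\langle A^*Bx_n,x_n\rangle=\langle Bx_n,Ax_n\rangle\to\|A\|\|B\|$. Because $|\langle Bx_n,Ax_n\rangle|\le\|Ax_n\|\|Bx_n\|\le\|A\|\|B\|$ while the left side tends to $\|A\|\|B\|$, a short squeezing argument (using $\|Ax_n\|\le\|A\|$ and $\|Bx_n\|\le\|B\|$ separately) forces $\|Ax_n\|\to\|A\|$ and $\|Bx_n\|\to\|B\|$, and conjugation again gives $\Re\langle Ax_n,Bx_n\rangle\to\|A\|\|B\|$. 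Substituting into the identity for $\|(A+B)x_n\|^2$ shows $\|(A+B)x_n\|^2\to(\|A\|+\|B\|)^2$, so $\|A+B\|\ge\|A\|+\|B\|$; the reverse inequality is automatic, and equality follows.

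I expect the main obstacle to be the infinite-dimensional bookkeeping rather than any deep idea: neither the operator norm nor the numerical range need be attained, so every step must be phrased through approximating sequences and the closure $\overline{W(A^*B)}$, and one must verify carefully that $\|Ax_n\|\|Bx_n\|\to\|A\|\|B\|$ genuinely forces each factor individually to its supremum. The one non-automatic deduction is passing from control of $\Re\langle Ax_n,Bx_n\rangle$ together with the Cauchy--Schwarz modulus bound to convergence of the full complex inner product; this is where the fact that the distinguished value $\|A\|\|B\|$ is real and nonnegative gets used, and it is exactly what makes the two directions close up into an equivalence.
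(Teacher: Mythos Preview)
The paper does not supply its own proof of this lemma; it is quoted verbatim from the cited reference \cite{BB} and used as a black box. Your argument is correct and is essentially the standard one: approximate by unit vectors, expand $\|(A+B)x\|^2=\|Ax\|^2+\|Bx\|^2+2\Re\langle Ax,Bx\rangle$, and squeeze via Cauchy--Schwarz. The point you flagged as non-automatic---that $\|Ax_n\|\|Bx_n\|\to\|A\|\|B\|$ with each factor bounded by its supremum forces both $\|Ax_n\|\to\|A\|$ and $\|Bx_n\|\to\|B\|$---is exactly where the nonzero hypothesis enters (divide by $\|A\|>0$ to bound $\|Bx_n\|$ from below, and symmetrically), and your sketch handles it.
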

\begin{theorem}\label{normA}
Let $A \in \mathcal{B}(\mathcal{H})$. If $w(A)=\frac{\|A\|}{2}$, then $$\|A\|^2=\|A^*A+AA^*\|=\|A^*A-AA^*\|.$$
\end{theorem}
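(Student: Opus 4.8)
The plan is to reduce everything to the Cartesian decomposition together with Corollary~\ref{c1}. Put $P=\Re(A)$ and $Q=\Im(A)$, both self-adjoint; expanding $A=P+\mathrm{i}Q$, $A^*=P-\mathrm{i}Q$ gives
\[
A^*A+AA^*=2(P^2+Q^2),\qquad A^*A-AA^*=2\mathrm{i}(PQ-QP).
\]
Since $w(A)=\tfrac12\|A\|$, Corollary~\ref{c1} yields $\|P\|=\|Q\|=\tfrac12\|A\|$ (we may assume $A\neq0$, the statement being trivial otherwise), so $\|P^2\|+\|Q^2\|=\|P\|^2+\|Q\|^2=\tfrac12\|A\|^2$.

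The first equality then follows from the two sides separately. Because $AA^*\ge0$ we have $\|A^*A+AA^*\|\ge\|A^*A\|=\|A\|^2$, while the triangle inequality gives $\|A^*A+AA^*\|=2\|P^2+Q^2\|\le2(\|P^2\|+\|Q^2\|)=\|A\|^2$. Hence $\|A^*A+AA^*\|=\|A\|^2$.

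For the second equality I would first record the cheap upper bound $\|A^*A-AA^*\|\le\|A\|^2$, which holds for \emph{every} operator: $A^*A-AA^*$ is self-adjoint and for any unit vector $x$ one has $\langle(A^*A-AA^*)x,x\rangle=\|Ax\|^2-\|A^*x\|^2\in[-\|A\|^2,\|A\|^2]$. The real point is the reverse inequality, and the trick is to route the commutator-type term through the first equality via the splitting $2A^*A=(A^*A+AA^*)+(A^*A-AA^*)$:
\[
2\|A\|^2=\|2A^*A\|\le\|A^*A+AA^*\|+\|A^*A-AA^*\|=\|A\|^2+\|A^*A-AA^*\|,
\]
so $\|A^*A-AA^*\|\ge\|A\|^2$, and together with the upper bound this forces $\|A^*A-AA^*\|=\|A\|^2$. (The same lower bound can also be obtained by hand: choose unit vectors $x_n$ with $\|Ax_n\|\to\|A\|$; then $\|Ax_n\|^2+\|A^*x_n\|^2=\langle(A^*A+AA^*)x_n,x_n\rangle\le\|A\|^2$ forces $A^*x_n\to0$, whence $\langle(A^*A-AA^*)x_n,x_n\rangle\to\|A\|^2$.)

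I expect the only genuine obstacle to be this last lower bound: a priori $PQ-QP$ can be arbitrarily small even when $\|P\|=\|Q\|$, and what rules that out here is precisely the identity $\|A^*A+AA^*\|=\|A\|^2$ established first. Lemma~\ref{lemmaA^*B} enters naturally at this junction, since the equality now forced in $\|2A^*A\|\le\|A^*A+AA^*\|+\|A^*A-AA^*\|$ is exactly the assertion that $\|A^*A+AA^*\|\,\|A^*A-AA^*\|\in\overline{W\bigl((A^*A+AA^*)(A^*A-AA^*)\bigr)}$, which can be exploited for the finer equality analysis.
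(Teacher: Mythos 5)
Your proof is correct, but it takes a genuinely different and more elementary route than the paper. The paper works with all rotations $e^{\mathrm{i}\theta}A$: it combines Theorem \ref{theorem1} with the Barraa--Boumazgour criterion (Lemma \ref{lemmaA^*B}) applied to the norm equality $\|\Re(e^{\mathrm{i}\theta}A)+\mathrm{i}\,\Im(e^{\mathrm{i}\theta}A)\|=\|\Re(e^{\mathrm{i}\theta}A)\|+\|\Im(e^{\mathrm{i}\theta}A)\|$ to extract the identity $\|\Re(e^{\mathrm{i}\theta}A)\|\,\|\Im(e^{\mathrm{i}\theta}A)\|=\frac{1}{4}\|A^*A-AA^*\|$, solves the resulting quadratic to obtain $\|A^*A-AA^*\|=\|A\|^2$ first, and only then deduces the $+$ case from $\|A\|^2\leq\|A^*A+AA^*\|\leq 4w^2(A)$. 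You reverse the order and need none of that machinery: Corollary \ref{c1}, the $C^*$-identity, positivity of $A^*A$ and $AA^*$, and two applications of the triangle inequality suffice. Your key observation --- that the splitting $2A^*A=(A^*A+AA^*)+(A^*A-AA^*)$ converts the already-established equality $\|A^*A+AA^*\|=\|A\|^2$ into the lower bound $\|A^*A-AA^*\|\geq\|A\|^2$, which meets the universally valid upper bound $\|A^*A-AA^*\|\leq\|A\|^2$ --- is clean and correct. In fact your argument proves a formally stronger statement: the conclusion already follows from $\|\Re(A)\|=\|\Im(A)\|=\frac{\|A\|}{2}$ alone, which (as the paper remarks after Corollary \ref{c1}) is strictly weaker than $w(A)=\frac{\|A\|}{2}$. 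What the paper's heavier route buys is the finer rotational information $\|\Re(e^{\mathrm{i}\theta}A)\|\,\|\Im(e^{\mathrm{i}\theta}A)\|=\frac{1}{4}\|A^*A-AA^*\|$ for every $\theta$, which is of independent interest but not needed here; your closing speculation about re-injecting Lemma \ref{lemmaA^*B} is harmless but plays no role in your proof.
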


\begin{proof} We note that 
 for all $\theta \in \mathbb{R},$ 
 $$\|A\|=\|\Re(e^{\rm i \theta }A)+\rm i \Im(e^{\rm i \theta}A)\|\leq \|\Re(e^{\rm i \theta }A)\|+\|\Im(e^{\rm i \theta}A)\|=\|A\|.$$ 
 Then using Lemma \ref{lemmaA^*B} we get, 
 $\|\Re(e^{\rm i \theta }A)\| \|\Im(e^{\rm i \theta}A)\|\in \overline{W\left (\rm i~~ \Re(e^{\rm i \theta }A)\Im(e^{\rm i \theta }A)\right)}$.
 Clearly, $ \|\Re(e^{\rm i \theta }A)\| \|\Im(e^{\rm i \theta}A)\|\leq w( \rm i~~ \Re(e^{\rm i \theta }A)\Im(e^{\rm i \theta }A) ) \leq \|\rm i~~ \Re(e^{\rm i \theta }A)\Im(e^{\rm i \theta }A) \| \leq \|\Re(e^{\rm i \theta }A)\| \|\Im(e^{\rm i \theta}A)\|. $  Since  $\|\Re(e^{\rm i \theta }A)\| \|\Im(e^{\rm i \theta}A)\|$ $\in \mathbb{R}$, 
 $\|\Re(e^{\rm i \theta }A)\| \|\Im(e^{\rm i \theta}A)\| $ $= \left \| \Re \left ( \rm i~~\Re(e^{\rm i \theta }A)\Im(e^{\rm i \theta }A)\right) \right\|.$   
 Clearly, $ \Re \left ( \rm i~~\Re(e^{\rm i \theta }A)\Im(e^{\rm i \theta }A)\right)=\frac{1}{4}(A^*A-AA^*). $     Thus we get, 
\begin{eqnarray}\label{2}
	\|\Re(e^{\rm i \theta }A)\|\|\Im(e^{\rm i \theta}A)\|=\frac{1}{4}\|A^*A-AA^*\|,~~\textit{for all}~~\theta \in \mathbb{R}.
\end{eqnarray}
 Also,
 \begin{eqnarray}\label{1.2}
 	\|\Re(e^{\rm i \theta }A)\|+\|\Im(e^{\rm i \theta}A)\|=\|A\|,~~\textit{for all}~~\theta \in \mathbb{R}.
 \end{eqnarray}
Therefore, from (\ref{2}) and (\ref{1.2}), we have
$$ \| \Re(e^{\rm i \theta }A)\| =\frac{1}{2} \left( \|A\|\pm \sqrt{\|A\|^2-\|A^*A-AA^*\|}\right) ~~\mbox{for all $\theta \in \mathbb{R}.$}$$ 
From Theorem \ref{theorem1} we get,  $ \| \Re(e^{\rm i \theta }A)\| = \frac{1}{2}\|A\|$  for all $\theta \in \mathbb{R},$  and so, $\|A\|^2-\|A^*A-AA^*\|=0$. Now, $\|A^*A-AA^*\|= \|A\|^2\leq \|A^*A+AA^*\|\leq 4w^2(A)=\|A\|^2$.  Hence, $\|A\|^2=\|A^*A-AA^*\|=\|A^*A+AA^*\|$.
\end{proof}

\begin{remark}
Kittaneh \cite[Prop. 1]{K05} proved that if $A^2=0$ then $\|A\|^2=\|AA^*-A^*A\|=\|AA^*+A^*A\|$, whereas Theorem \ref{normA} says that if $w(A)=\frac{\|A\|}{2}$ then $\|A\|^2=\|AA^*-A^*A\|=\|AA^*+A^*A\|$. Clearly, $\left\{A\in \mathcal{B}(\mathcal{H}):A^2=0 \right\}$ $ \subseteq \left\{A\in \mathcal{B}(\mathcal{H}):w(A)=\frac{\|A\|}{2}\right \} $ is proper.
Thus, Theorem \ref{normA} is applicable to larger class of operators than \cite[Prop. 1]{K05}.
	
\end{remark}

In the next theorem we obtain another lower bound for the numerical radius.

\begin{theorem}\label{corr}
	Let $A \in \mathcal{B}(\mathcal{H})$, then
	\begin{eqnarray*}
		w^2(A)&\geq& \frac{1}{4} \left \|A^*A+AA^* \right\| +  \frac{1}{2}\left|~\|\Re(A)\|^2-\|\Im(A)\|^2~~ \right|.
	\end{eqnarray*}
\end{theorem}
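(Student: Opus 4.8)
The plan is to follow the same strategy as in Theorem~\ref{th3}: apply the elementary identity $\max\{a,b\}=\frac{a+b}{2}+\frac{|a-b|}{2}$, but now to $a=\|\Re(A)\|^{2}$ and $b=\|\Im(A)\|^{2}$, and then exploit the operator identity
\[
A^{*}A+AA^{*}=2\bigl(\Re(A)^{2}+\Im(A)^{2}\bigr),
\]
which drops out of the Cartesian decomposition $A=\Re(A)+{\rm i}\,\Im(A)$ upon expanding $(A+A^{*})^{2}$ and $(A-A^{*})^{2}$.

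First I would recall from the proof of Theorem~\ref{th3} that $w(A)\ge\|\Re(A)\|$ and $w(A)\ge\|\Im(A)\|$, so that
\[
w^{2}(A)\ \ge\ \max\bigl\{\|\Re(A)\|^{2},\,\|\Im(A)\|^{2}\bigr\}
=\frac{\|\Re(A)\|^{2}+\|\Im(A)\|^{2}}{2}+\frac{\bigl|\,\|\Re(A)\|^{2}-\|\Im(A)\|^{2}\,\bigr|}{2}.
\]

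Next I would bound the first summand on the right from below. Since $\Re(A)$ and $\Im(A)$ are self-adjoint, $\|\Re(A)^{2}\|=\|\Re(A)\|^{2}$ and $\|\Im(A)^{2}\|=\|\Im(A)\|^{2}$; hence, by the triangle inequality together with the operator identity above,
\[
\|\Re(A)\|^{2}+\|\Im(A)\|^{2}=\|\Re(A)^{2}\|+\|\Im(A)^{2}\|\ \ge\ \|\Re(A)^{2}+\Im(A)^{2}\|=\tfrac12\|A^{*}A+AA^{*}\|.
\]
Feeding this into the previous display gives
\[
w^{2}(A)\ \ge\ \frac14\|A^{*}A+AA^{*}\|+\frac12\bigl|\,\|\Re(A)\|^{2}-\|\Im(A)\|^{2}\,\bigr|,
\]
which is the asserted inequality; in particular it refines Kittaneh's lower bound $w^{2}(A)\ge\frac14\|A^{*}A+AA^{*}\|$ from \cite{K05}.

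The step most deserving of care is the verification of the operator identity $A^{*}A+AA^{*}=2(\Re(A)^{2}+\Im(A)^{2})$ and the use of $\|T^{2}\|=\|T\|^{2}$ for self-adjoint $T$; beyond that the argument is purely formal, so I do not anticipate a genuine obstacle. I would also note, as a consistency check, that the bound obtained never exceeds $\max\{\|\Re(A)\|^{2},\|\Im(A)\|^{2}\}\le w^{2}(A)$, so it lies between Kittaneh's bound and that maximum, with equality holding, for example, when $A$ is self-adjoint or skew-adjoint and when $A^{2}=0$.
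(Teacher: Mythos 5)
Your proposal is correct and follows essentially the same route as the paper's own proof: both pass from $w(A)\ge\|\Re(A)\|$ and $w(A)\ge\|\Im(A)\|$ to $\max\{\|\Re(A)\|^2,\|\Im(A)\|^2\}$ via the identity $\max\{a,b\}=\frac{a+b}{2}+\frac{|a-b|}{2}$, then use $\|T^2\|=\|T\|^2$ for self-adjoint $T$, the triangle inequality, and the Cartesian identity $(\Re(A))^2+(\Im(A))^2=\frac12(A^*A+AA^*)$. No gaps; the argument matches the paper step for step.
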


\begin{proof}
Let $x$ be an unit vector in $\mathcal{H}$. Then
it follows from the Cartesian decomposition of $A$ that $|\langle Ax,x\rangle|^2=|\langle \Re(A)x,x\rangle|^2+|\langle \Im(A)x,x\rangle|^2.$
This implies $w(A) \geq \| \Re(A) \|$ and $w(A) \geq \| \Im(A) \|$ and so,
	\begin{eqnarray*}
		w^2(A)&\geq& \max \left\{\| \Re(A) \|^2,  \| \Im(A) \|^2  \right\}\\
		&=& \frac{\| \Re(A) \|^2+  \| \Im(A) \|^2}{2}+ \frac{  \left| \| \Re(A) \|^2- \| \Im(A) \|^2\right|}{2} \\
		&=& \frac{\|(\Re(A))^2 \|+\|(\Im(A))^2\|}{2}+ \frac{  \left| \| \Re(A) \|^2- \| \Im(A) \|^2\right|}{2} \\
		&\geq& \frac{\|(\Re(A))^2 +(\Im(A))^2  \|}{2}+ \frac{  \left| \| \Re(A) \|^2- \| \Im(A) \|^2\right|}{2} \\
		&=& \frac{1}{4} \left \|A^*A+AA^* \right \|+ \frac{  \left| \| \Re(A) \|^2- \| \Im(A) \|^2\right|}{2},
	\end{eqnarray*}
	as required.

\end{proof}

\begin{remark}
	Clearly, Theorem \ref{corr} is a non trivial improvement of $w^2(A)$ $\geq \frac{1}{4}\|A^*A+AA^*\|$,
obtained by Kittaneh \cite[Th. 1]{K05}.
\end{remark}

Now, using Crawford number we obtain our next refinement.

\begin{theorem}\label{th1}
Let $A \in \mathcal{B}(\mathcal{H})$, then
\begin{eqnarray*}
w^2(A)&\geq& \frac{1}{4} \left \|A^*A+AA^* \right\|+\frac{c^2(\Re(A))+c^2(\Im(A))}{2}\\
&& +\left|  \frac{\|\Re(A)\|^2-\|\Im(A)\|^2}{2} + \frac{c^2(\Im(A))- c^2(\Re(A))}{2}   \right|.
\end{eqnarray*}
\end{theorem}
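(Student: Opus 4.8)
The plan is to follow the same two‑line scheme used for Theorems \ref{th3} and \ref{corr}, but to replace the crude bounds $w(A)\ge\|\Re(A)\|$ and $w(A)\ge\|\Im(A)\|$ by sharper inequalities that also retain the Crawford number of the complementary Cartesian part. Throughout write $P=\Re(A)$ and $Q=\Im(A)$; both are self-adjoint, and for every unit vector $x$ the Cartesian decomposition $A=P+\mathrm iQ$ gives
\[
|\langle Ax,x\rangle|^2=\langle Px,x\rangle^2+\langle Qx,x\rangle^2 ,
\]
where the two inner products are real.

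The first step is to prove that
\[
w^2(A)\ \ge\ \|P\|^2+c^2(Q)\qquad\text{and}\qquad w^2(A)\ \ge\ \|Q\|^2+c^2(P).
\]
For the first one, observe that $\langle Qx,x\rangle^2\ge c^2(Q)$ for \emph{every} unit vector $x$, directly from the definition of the Crawford number; hence $|\langle Ax,x\rangle|^2\ge\langle Px,x\rangle^2+c^2(Q)$ for every unit $x$, and taking the supremum over $x$ gives $w^2(A)\ge\sup_{\|x\|=1}\langle Px,x\rangle^2+c^2(Q)=\|P\|^2+c^2(Q)$, since $\sup_{\|x\|=1}\langle Px,x\rangle^2=\big(\sup_{\|x\|=1}|\langle Px,x\rangle|\big)^2=w^2(P)=\|P\|^2$ for the self-adjoint operator $P$. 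The second inequality follows by interchanging the roles of $P$ and $Q$.

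The second step is purely arithmetic: applying $\max\{a,b\}=\tfrac{a+b}{2}+\tfrac{|a-b|}{2}$ with $a=\|P\|^2+c^2(Q)$ and $b=\|Q\|^2+c^2(P)$ yields
\[
w^2(A)\ \ge\ \frac{\|P\|^2+\|Q\|^2}{2}+\frac{c^2(P)+c^2(Q)}{2}+\left|\frac{\|P\|^2-\|Q\|^2}{2}+\frac{c^2(Q)-c^2(P)}{2}\right|,
\]
and then, exactly as in the proof of Theorem \ref{corr}, I would use $\|P\|^2=\|P^2\|$, $\|Q\|^2=\|Q^2\|$, the triangle inequality $\|P^2\|+\|Q^2\|\ge\|P^2+Q^2\|$ and the identity $P^2+Q^2=\tfrac12(A^*A+AA^*)$ to replace the term $\tfrac{\|P\|^2+\|Q\|^2}{2}$ by the smaller quantity $\tfrac14\|A^*A+AA^*\|$. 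This produces precisely the claimed inequality.

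I do not expect a genuine obstacle here. The only points needing a little attention are: passing the additive constant $c^2(Q)$ through the supremum in the first step; using that for a self-adjoint operator the supremum of $\langle\,\cdot\,x,x\rangle^2$ over unit vectors equals the square of the operator norm (equivalently $w(P)=\|P\|$); and bookkeeping the signs so that the absolute-value term emerges in exactly the form $\big|\tfrac12(\|\Re(A)\|^2-\|\Im(A)\|^2)+\tfrac12(c^2(\Im(A))-c^2(\Re(A)))\big|$. Everything else is the same manipulation already carried out twice earlier in the paper.
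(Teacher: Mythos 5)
Your proposal is correct and follows essentially the same route as the paper: the same Cartesian-decomposition identity, the same two intermediate bounds $w^2(A)\geq\|\Re(A)\|^2+c^2(\Im(A))$ and $w^2(A)\geq\|\Im(A)\|^2+c^2(\Re(A))$, the same $\max\{a,b\}=\frac{a+b}{2}+\frac{|a-b|}{2}$ trick, and the same finish via $\|P\|^2=\|P^2\|$, the triangle inequality, and $P^2+Q^2=\frac{1}{2}(A^*A+AA^*)$. You merely spell out the derivation of the two intermediate bounds, which the paper states without detail.
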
 
\begin{proof}
Let $x$ be an unit vector in $\mathcal{H}$. Then
it follows from the Cartesian decomposition of $A$ that 
$|\langle Ax,x\rangle|^2=|\langle \Re(A)x,x\rangle|^2+|\langle \Im(A)x,x\rangle|^2.$
	This implies the following two inequalities: $ w^2(A) \geq \| \Re(A) \|^2+c^2(\Im(A)) $ and $w^2(A) \geq \| \Im(A) \|^2+c^2(\Re(A)).$
Therefore, we have
\begin{eqnarray*}
	w^2(A)&\geq& \max \left\{\| \Re(A) \|^2+c^2(\Im(A)),  \| \Im(A) \|^2+c^2(\Re(A))  \right\}\\
	&=& \frac{\| \Re(A) \|^2+c^2(\Im(A))+  \| \Im(A) \|^2+c^2(\Re(A))}{2}\\
	&&+ \left|\frac{\| \Re(A) \|^2+c^2(\Im(A))- \| \Im(A) \|^2-c^2(\Re(A))}{2} \right|\\
	&=& \frac{\|\Re(A) \|^2+\|\Im(A)\|^2}{2}+\frac{c^2(\Re(A))+c^2(\Im(A))}{2}\\
	&& +\left|  \frac{\|\Re(A)\|^2-\|\Im(A)\|^2}{2} + \frac{c^2(\Im(A))- c^2(\Re(A))}{2}   \right| \\
	&=& \frac{\|(\Re(A))^2 \|+\|(\Im(A))^2\|}{2}+\frac{c^2(\Re(A))+c^2(\Im(A))}{2}\\
	&& +\left|  \frac{\|\Re(A)\|^2-\|\Im(A)\|^2}{2} + \frac{c^2(\Im(A))- c^2(\Re(A))}{2}   \right| \\
	&\geq& \frac{\|(\Re(A))^2 +(\Im(A))^2  \|}{2}+\frac{c^2(\Re(A))+c^2(\Im(A))}{2}\\
	&& +\left|  \frac{\|\Re(A)\|^2-\|\Im(A)\|^2}{2} + \frac{c^2(\Im(A))- c^2(\Re(A))}{2}   \right| \\
	&=& \frac{1}{4} \left \|A^*A+AA^* \right \|+\frac{c^2(\Re(A))+c^2(\Im(A))}{2}\\
	&& +\left|  \frac{\|\Re(A)\|^2-\|\Im(A)\|^2}{2} + \frac{c^2(\Im(A))- c^2(\Re(A))}{2}   \right|,
	\end{eqnarray*}
as required.
	
\end{proof}

  \begin{remark}
   In \cite[Cor. 2.3]{P2}, the authors obtained that if $A\in \mathcal{B}(\mathcal{H})$, then
  \begin{eqnarray}\label{bhuniapaul}
  	w^2(A)&\geq& \frac{1}{4} \left \|A^*A+AA^* \right\|+\frac{1}{2}\left(c^2(\Re(A))+c^2(\Im(A))\right).
  \end{eqnarray}
Clearly, Theorem \ref{th1} is stronger than (\ref{bhuniapaul}).
  \end{remark}

Based on Theorem \ref{th1}  we prove the following corollary.

\begin{corollary}\label{cor-1}
	Let $A\in \mathcal{B}(\mathcal{H})$. If $w(A)=\frac{\sqrt{\left\|A^*A+AA^*\right\|}}{2}$, then the following results hold:
	
	$(i)$ There exist norm one sequences $\{x_n\}$ and $\{y_n\}$ in $\mathcal{H}$ such that 
	\begin{eqnarray*}
	|\langle \Re(A) x_n,x_n\rangle|\to 0 ~~\mbox{and} ~~	|\langle \Im(A) ~y_n,y_n\rangle|\to 0.
	\end{eqnarray*}

$(ii)$ $\| \Re(A)\|=\|\Im(A)\|=\frac{\sqrt{\left\|A^*A+AA^*\right\|}}{2}$.
\end{corollary}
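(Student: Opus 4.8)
The plan is to read the hypothesis as an equality case of Kittaneh's lower bound and then feed it through Theorem~\ref{th1}. First I would recall that $w^2(A)\ge\frac14\|A^*A+AA^*\|$ always holds \cite[Th.~1]{K05}, so the assumption $w(A)=\frac12\sqrt{\|A^*A+AA^*\|}$ is precisely the statement that $w^2(A)=\frac14\|A^*A+AA^*\|$.

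Next I would substitute this into the inequality of Theorem~\ref{th1}. The left-hand side there equals the leading summand $\frac14\|A^*A+AA^*\|$ on the right, while the two remaining summands, namely $\frac12\big(c^2(\Re(A))+c^2(\Im(A))\big)$ and the absolute-value term, are each nonnegative. Hence both must vanish. Vanishing of the first forces $c(\Re(A))=c(\Im(A))=0$; substituting these back into the absolute-value term collapses it to $\frac12\big|\,\|\Re(A)\|^2-\|\Im(A)\|^2\,\big|=0$, so $\|\Re(A)\|=\|\Im(A)\|$. Part $(i)$ is then immediate from the definition of the Crawford number: $c(\Re(A))=\inf\{|\langle\Re(A)x,x\rangle|:\|x\|=1\}=0$ supplies a norm-one sequence $\{x_n\}$ with $|\langle\Re(A)x_n,x_n\rangle|\to0$, and likewise $c(\Im(A))=0$ supplies $\{y_n\}$.

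For part $(ii)$ I would use the operator identity $(\Re(A))^2+(\Im(A))^2=\frac12(A^*A+AA^*)$ together with the triangle inequality and $\|\Re(A)\|=\|\Im(A)\|$ to obtain
\[
\tfrac12\|A^*A+AA^*\|=\big\|(\Re(A))^2+(\Im(A))^2\big\|\le\|(\Re(A))^2\|+\|(\Im(A))^2\|=2\|\Re(A)\|^2,
\]
so that $\|\Re(A)\|^2\ge\frac14\|A^*A+AA^*\|=w^2(A)$. On the other hand $\|\Re(A)\|\le w(A)$ (as observed in the proof of Theorem~\ref{th3}), whence both inequalities are equalities and $\|\Re(A)\|=\|\Im(A)\|=\frac12\sqrt{\|A^*A+AA^*\|}$.

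I do not expect a genuine obstacle. The only points requiring care are the bookkeeping that forces both nonnegative tail terms in Theorem~\ref{th1} to vanish at once (and the way vanishing Crawford numbers then simplify the absolute-value term), and the elementary verification of the identity $(\Re(A))^2+(\Im(A))^2=\frac12(A^*A+AA^*)$ used in part $(ii)$.
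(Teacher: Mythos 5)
Your proposal is correct and follows essentially the same route as the paper: both arguments force the two nonnegative tail terms in Theorem \ref{th1} to vanish, read off $c(\Re(A))=c(\Im(A))=0$ and $\|\Re(A)\|=\|\Im(A)\|$, and then close part $(ii)$ via the identity $(\Re(A))^2+(\Im(A))^2=\tfrac12(A^*A+AA^*)$ and the bound $\|\Re(A)\|\le w(A)$. The only difference is cosmetic --- you run the final chain of inequalities in the opposite order from the paper's single sandwich $\|\Re(A)\|^2\le w^2(A)=\tfrac12\|(\Re(A))^2+(\Im(A))^2\|\le\|\Re(A)\|^2$.
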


\begin{proof}
It follows from Theorem \ref{th1} that
\begin{eqnarray*}
	w^2(A)&\geq& \frac{1}{4} \left \|A^*A+AA^* \right\|+\frac{c^2(\Re(A))+c^2(\Im(A))}{2}\\
	&& +\left|  \frac{\|\Re(A)\|^2-\|\Im(A)\|^2}{2} + \frac{c^2(\Im(A))- c^2(\Re(A))}{2}   \right|\\
	&\geq& \frac{1}{4} \left \|A^*A+AA^* \right\|.
\end{eqnarray*}
This implies that $c(\Re(A))=c(\Im(A))=0$, and so (i) holds. Also we have, $\| \Re(A)\|=\|\Im(A)\|$. Now the last equality of (ii) follows from the following chain of inequalities:
\[\| \Re(A)\|^2 \leq w^2(A)=\frac{\|A^*A+AA^*\|}{4}=\frac{1}{2} \left\|(\Re(A))^2+(\Im(A))^2   \right\| \leq \| \Re(A)\|^2.\]
\end{proof}

We observe that the reverse part of Corollary \ref{cor-1} may not be true, in general. In this context, we obtain an equivalent condition for $w(A)=\frac{\sqrt{\left\|A^*A+AA^*\right\|}}{2}$.

\begin{theorem}\label{theorem10}
	Let $A\in \mathcal{B}(\mathcal{H})$. Then $w^2(A)=\frac{1}{4}{\left\|A^*A+AA^*\right\|}$ if and only if $\|\Re(e^{\rm i \theta}A)\|^2=\|\Im(e^{\rm i \theta}A)\|^2=\frac{1}{4}\|A^*A+AA^*\|$ for all $\theta \in \mathbb{R}$.
\end{theorem}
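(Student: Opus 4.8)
The plan is to mimic the structure of the proof of Theorem \ref{theorem1}, replacing the role of $\|A\|$ (and the lower bound $w(A)\ge\tfrac12\|A\|$) by $\sqrt{\|A^*A+AA^*\|}$ (and the lower bound $w^2(A)\ge\tfrac14\|A^*A+AA^*\|$). The direction ``$\Leftarrow$'' is immediate: if $\|\Re(e^{{\rm i}\theta}A)\|^2=\|\Im(e^{{\rm i}\theta}A)\|^2=\tfrac14\|A^*A+AA^*\|$ for some (indeed all) $\theta$, then $w(A)\ge\|\Re(A)\|=\tfrac12\sqrt{\|A^*A+AA^*\|}$, while $w^2(A)\le\tfrac14\|A^*A+AA^*\|$ always holds by \cite[Th. 1]{K05}; hence equality. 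So the work is entirely in ``$\Rightarrow$''.

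For ``$\Rightarrow$'', assume $w^2(A)=\tfrac14\|A^*A+AA^*\|$. First I would observe that for every $\theta\in\mathbb{R}$, $e^{{\rm i}\theta}A$ again satisfies the hypothesis, because $w(e^{{\rm i}\theta}A)=w(A)$ and $(e^{{\rm i}\theta}A)^*(e^{{\rm i}\theta}A)+(e^{{\rm i}\theta}A)(e^{{\rm i}\theta}A)^* = A^*A+AA^*$, so $\|(e^{{\rm i}\theta}A)^*(e^{{\rm i}\theta}A)+(e^{{\rm i}\theta}A)(e^{{\rm i}\theta}A)^*\|=\|A^*A+AA^*\|$. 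Thus it suffices to prove $\|\Re(A)\|^2=\|\Im(A)\|^2=\tfrac14\|A^*A+AA^*\|$, and then apply this to $e^{{\rm i}\theta}A$ for each $\theta$; note $\Re(e^{{\rm i}\theta}A)$ and $\Im(e^{{\rm i}\theta}A)$ are exactly the real and imaginary parts of $e^{{\rm i}\theta}A$, so the conclusion for all $\theta$ follows. But $\|\Re(A)\|^2=\|\Im(A)\|^2=\tfrac14\|A^*A+AA^*\|$ is precisely Corollary \ref{cor-1}(ii), which is available under exactly this hypothesis. So the proof reduces to: (a) invoke Corollary \ref{cor-1}(ii) to get the equalities for $A$; (b) replace $A$ by $e^{{\rm i}\theta}A$, check the hypothesis is preserved, and conclude.

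The main (and essentially only) obstacle is verifying that Corollary \ref{cor-1}(ii) really does give $\|\Re(A)\|^2=\|\Im(A)\|^2=\tfrac14\|A^*A+AA^*\|$ from the hypothesis $w(A)=\tfrac12\sqrt{\|A^*A+AA^*\|}$ — but this is stated there verbatim, so nothing new is needed; the subtlety is merely the bookkeeping that $\|A^*A+AA^*\|$ is rotation-invariant while $\|A\|$, $\|\Re(A)\|$, $\|\Im(A)\|$ individually are not, so one genuinely must pass through $e^{{\rm i}\theta}A$ rather than deduce the $\theta$-statement directly. I would write this up as follows.

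\begin{proof}
The ``if'' part is trivial: if $\|\Re(e^{{\rm i}\theta}A)\|^2=\|\Im(e^{{\rm i}\theta}A)\|^2=\frac14\|A^*A+AA^*\|$ for all $\theta\in\mathbb{R}$, then in particular (taking $\theta=0$) $w(A)\ge\|\Re(A)\|=\frac12\sqrt{\|A^*A+AA^*\|}$, and combining with $w^2(A)\le\frac14\|A^*A+AA^*\|$ from \cite[Th. 1]{K05} we get $w^2(A)=\frac14\|A^*A+AA^*\|$.

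Conversely, assume $w^2(A)=\frac14\|A^*A+AA^*\|$. Fix $\theta\in\mathbb{R}$ and set $B=e^{{\rm i}\theta}A$. Then $w(B)=w(A)$ and, since $B^*B+BB^*=A^*A+AA^*$, we have $\|B^*B+BB^*\|=\|A^*A+AA^*\|$, so $w^2(B)=\frac14\|B^*B+BB^*\|$, i.e.\ $w(B)=\frac{\sqrt{\|B^*B+BB^*\|}}{2}$. Applying Corollary \ref{cor-1}(ii) to $B$ yields
\[
\|\Re(B)\|=\|\Im(B)\|=\frac{\sqrt{\|B^*B+BB^*\|}}{2}=\frac{\sqrt{\|A^*A+AA^*\|}}{2}.
\]
Since $\Re(B)=\Re(e^{{\rm i}\theta}A)$ and $\Im(B)=\Im(e^{{\rm i}\theta}A)$, this says $\|\Re(e^{{\rm i}\theta}A)\|^2=\|\Im(e^{{\rm i}\theta}A)\|^2=\frac14\|A^*A+AA^*\|$. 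As $\theta\in\mathbb{R}$ was arbitrary, the proof is complete.
\end{proof}
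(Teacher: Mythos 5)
Your ``only if'' direction is correct and takes a genuinely different (and arguably cleaner) route than the paper: the paper runs a direct squeeze for each fixed $\theta$, using the identity $\left(\Re(e^{{\rm i}\theta}A)\right)^2+\left(\Im(e^{{\rm i}\theta}A)\right)^2=\frac{A^*A+AA^*}{2}$ together with $\|\Re(e^{{\rm i}\theta}A)\|\le w(A)$ and $\|\Im(e^{{\rm i}\theta}A)\|\le w(A)$, and then pins down each norm from the constancy of the sum. You instead observe that the hypothesis $w^2(A)=\frac14\|A^*A+AA^*\|$ is invariant under $A\mapsto e^{{\rm i}\theta}A$ (both $w$ and $A^*A+AA^*$ are unchanged) and simply quote Corollary \ref{cor-1}(ii) for the rotated operator. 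The underlying computation is the same (it sits inside the proof of Corollary \ref{cor-1}), but your reduction avoids repeating it and makes transparent that the theorem is exactly ``Corollary \ref{cor-1}(ii) plus rotation invariance.''

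However, your justification of the ``if'' direction contains a genuine error: you cite \cite[Th. 1]{K05} for the inequality $w^2(A)\le\frac14\|A^*A+AA^*\|$, but that theorem asserts $\frac14\|A^*A+AA^*\|\le w^2(A)\le\frac12\|A^*A+AA^*\|$ --- the inequality you invoke points the wrong way, and if it held in general it would (combined with the lower bound) force $w^2(A)=\frac14\|A^*A+AA^*\|$ for every operator. The $\theta=0$ instance of the hypothesis only yields the lower bound $w(A)\ge\|\Re(A)\|=\frac12\sqrt{\|A^*A+AA^*\|}$; for the matching upper bound you must use the full ``for all $\theta$'' hypothesis through the standard identity $w(A)=\sup_{\theta\in\mathbb{R}}\|\Re(e^{{\rm i}\theta}A)\|$, recorded in the paper's introduction, which gives $w(A)=\frac12\sqrt{\|A^*A+AA^*\|}$ at once. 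With that one-line repair your proof is complete.
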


\begin{proof}
	The sufficient part is trivial, so we only  prove the necessary part. Let $w^2(A)=\frac{1}{4}\|A^*A+AA^*\|.$ Let $\theta\in \mathbb{R}$ be arbitrary. Then by simple computation we have, $\left(\Re(e^{\rm i \theta}A)\right )^2+\left(\Im(e^{\rm i \theta}A)\right)^2=\frac{A^*A+AA^*}{2}.$ Now, we have 
\begin{eqnarray*}
\frac{1}{4}\|A^*A+AA^*\| &=& \frac{1}{2} \left\|\left(\Re(e^{\rm i \theta}A)\right )^2+\left(\Im(e^{\rm i \theta}A)\right)^2 \right\| \\
&\leq& \frac{1}{2} \left( \left\| \Re(e^{\rm i \theta}A) \right\| ^2+\left\| \Im(e^{\rm i \theta}A)\right\| ^2 \right)\\
&\leq& w^2(A)\\
&=& \frac{1}{4}\|A^*A+AA^*\|.
\end{eqnarray*}
This implies that  $ \left\| \Re(e^{\rm i \theta}A) \right\| ^2+\left\| \Im(e^{\rm i \theta}A)\right\| ^2 = \frac{1}{2}\|A^*A+AA^*\|.$
Now, $$\sup_{\theta\in \mathbb{R}}\left\| \Re(e^{\rm i \theta}A) \right\| ^2=\frac{1}{4}\|A^*A+AA^*\|=\sup_{\theta\in \mathbb{R}}\left\| \Im(e^{\rm i \theta}A) \right\| ^2.$$ Thus, $\|\Re(e^{\rm i \theta}A)\|^2=\|\Im(e^{\rm i \theta}A)\|^2$ $=\frac{1}{4}\|A^*A+AA^*\|$ for all $\theta \in \mathbb{R}$.
\end{proof}

In the next theorem we characterize numerical range of an operator when the numerical radius  attains its lower bounds, namely, $w(A)= \frac{\|A\|}{2} $ and $w(A)=\frac{\sqrt{\|A^*A+AA^*\|}}{2}$.

To do so we need the following lemma. This was known for matrices, (see \cite{MP}). 

\begin{lemma}\label{thapp}
	Let $A\in \mathcal{B}(\mathcal{H})$. Then the following are equivalent:\\
		(i) $\|\Re(e^{\rm i \theta }A)\|=k,$ (k is a constant) for all $\theta \in \mathbb{R}$.\\
	(ii) ${W(A)}$ is a circular disk with center at the origin and radius $k$.
\end{lemma}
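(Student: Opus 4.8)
The plan is to pass to the closure $K:=\overline{W(A)}$, a nonempty compact convex subset of $\mathbb{C}$, and to read both conditions off the geometry of $K$. The one computational input I would use is that, since $\Re(e^{\rm i\theta}A)$ is self-adjoint, its operator norm equals its numerical radius, so that for every real $\theta$
\[
\|\Re(e^{\rm i\theta}A)\|=\sup_{\|x\|=1}|\Re(e^{\rm i\theta}\langle Ax,x\rangle)|=\sup_{z\in W(A)}|\Re(e^{\rm i\theta}z)|=\max_{z\in K}|\Re(e^{\rm i\theta}z)|,
\]
the last equality by continuity of $z\mapsto|\Re(e^{\rm i\theta}z)|$ and compactness of $K$; taking the supremum over $\theta$ recovers the identity $\sup_{\theta}\|\Re(e^{\rm i\theta}A)\|=w(A)$.

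The implication (ii)$\Rightarrow$(i) is immediate: if $W(A)$ is the disk $\{z:|z|\le k\}$, then $\|\Re(e^{\rm i\theta}A)\|=\max_{|z|\le k}|\Re(e^{\rm i\theta}z)|=k$ for every $\theta$. For (i)$\Rightarrow$(ii) I would first take the supremum over $\theta$ to get $w(A)=k$, so that $K\subseteq\{z:|z|\le k\}$; since $K$ is convex, the reverse inclusion follows once the whole circle $\{|z|=k\}$ is shown to lie in $K$. Fix $\theta$. By compactness the value $k=\|\Re(e^{\rm i\theta}A)\|$ is attained at some $z_\theta\in K$, so $|\Re(e^{\rm i\theta}z_\theta)|=k$; combined with $|z_\theta|\le k$ and the fact that $|\Re(w)|=|w|$ only for real $w$, this forces $e^{\rm i\theta}z_\theta=\pm k$, i.e.\ $z_\theta\in\{ke^{-\rm i\theta},\,-ke^{-\rm i\theta}\}$. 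Letting $\theta$ vary, we obtain: for every $w$ with $|w|=k$, at least one of $w$ and $-w$ lies in $K$; equivalently, writing $S:=K\cap\{|z|=k\}$, we have $S$ closed, $S\cup(-S)=\{|z|=k\}$, and $\operatorname{conv}(S)\subseteq K$.

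The step I expect to be the main obstacle is upgrading ``$w\in K$ or $-w\in K$'' to ``$w\in K$'', that is, excluding a lopsided $S$: for instance a closed semicircle, or more generally the set obtained from the full circle by deleting an open arc disjoint from its own antipode, all of which satisfy $S\cup(-S)=\{|z|=k\}$ while $\operatorname{conv}(S)$ is only a sub-disk (a half-disk, a truncated disk, and so on), and each of which still makes every single identity $\|\Re(e^{\rm i\theta}A)\|=k$ hold. Ruling these out therefore requires using the hypothesis for all $\theta$ jointly, and this is precisely where I would invoke the matrix characterization of \cite{MP}; it forces $S=\{|z|=k\}$, hence $K\supseteq\operatorname{conv}\{|z|=k\}=\{|z|\le k\}$, so $K=\{|z|\le k\}$. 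A short final step then passes from $\overline{W(A)}$ to $W(A)$ itself: $W(A)$ is convex with $\operatorname{relint}(\overline{W(A)})\subseteq W(A)\subseteq\overline{W(A)}$, so once $\overline{W(A)}$ is the closed disk of radius $k$ about the origin, $W(A)$ is the circular disk of radius $k$ centred at the origin, which is (ii).
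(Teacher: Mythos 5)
Your reduction to the compact convex set $K=\overline{W(A)}$, your proof of (ii)$\Rightarrow$(i), and your derivation that for each $\theta$ at least one of $\pm k e^{\mathrm{-i}\theta}$ lies in $K$ are all correct, and they contain the same information as the paper's argument (which extracts unit vectors $x_n^{\theta}$ with $|\mathrm{Re}(e^{\mathrm{i}\theta}\langle Ax_n^{\theta},x_n^{\theta}\rangle)|\to k$). But the step you flag as the main obstacle is a genuine gap, and your proposed fix does not close it: the result of \cite{MP} you want to invoke is a statement about matrices, whereas for a general $A\in\mathcal{B}(\mathcal{H})$ the closure of the numerical range can be essentially an arbitrary compact convex set, so nothing excludes the lopsided sets $S$ you describe. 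Indeed they occur. Let $A$ be multiplication by $ke^{\mathrm{i}\phi}$ on $L^2[0,\pi]$. Then $\Re(e^{\mathrm{i}\theta}A)$ is multiplication by $k\cos(\theta+\phi)$, and since the interval $[\theta,\theta+\pi]$ always contains a multiple of $\pi$ we get $\|\Re(e^{\mathrm{i}\theta}A)\|=k$ for every $\theta$; yet every point of $W(A)$ has strictly positive imaginary part (because $\sin\phi>0$ a.e.\ on $[0,\pi]$), so $W(A)$ is essentially the open upper half-disk and is certainly not a disk centred at the origin. Hence (i) does not imply (ii) at this level of generality, and no argument that uses only the quantities $\sup_{z\in W(A)}|\mathrm{Re}(e^{\mathrm{i}\theta}z)|$ --- which is all that both you and the paper extract from the hypothesis --- can possibly succeed.

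It is worth noting that the paper's own proof founders on exactly the rock you identified: from the existence, for each $\theta$, of the sequences $x_n^{\theta}$ it asserts that ``the boundary of $W(A)$ must be a circle with center at the origin,'' which is precisely the unjustified upgrade from ``$w\in K$ or $-w\in K$'' to ``$w\in K$.'' The lemma is true for matrices, where a circular arc on $\partial W(A)$ forces the entire circle to be a component of Kippenhahn's boundary generating curve (this is the substance of \cite{MP}), but that argument is intrinsically finite-dimensional and neither you nor the paper actually carries it out. So your diagnosis of where the difficulty lies is exactly right; the difficulty simply cannot be surmounted for general $A\in\mathcal{B}(\mathcal{H})$, and any correct version of the lemma must either restrict to matrices or add hypotheses.
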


\begin{proof}
	$(i)\Rightarrow (ii).$ We have, $w\left(\Re(e^{\rm i \theta }A)\right)=k$ for all $\theta \in \mathbb{R}$. So, $\sup_{\|x\|=1}|\langle \Re(e^{\rm i \theta }A)x,x\rangle |$ $ =k$, i.e., $\sup_{\|x\|=1}|Re (e^{\rm i \theta }\langle Ax,x\rangle )|=k$ for all $\theta \in \mathbb{R}$. Thus, for each $\theta \in \mathbb{R}$, there exist a norm one sequence $\{x_n^{\theta} \}$ in $\mathcal{H}$ such that $|Re (e^{\rm i \theta }\langle Ax_{n}^{\theta},x_{n}^{\theta}\rangle )|\to k$. This implies that the boundary of  $W(A)$ must be a circle with center at the origin and radius $k$. Since $W(A)$ is a convex subset of $\mathbb{C}$, so  ${W(A)}$ is a circular disk with center at the origin and radius $k$.\\
	$(ii)\Rightarrow (i).$ Follows easily.
\end{proof}

The desired theorem now follows easily from  Theorem \ref{theorem1} and Theorem \ref{theorem10}.

\begin{theorem}\label{prop12}
	Let $A\in \mathcal{B}(\mathcal{H})$. Then we have,\\
	(i) $w(A)=\frac{1}{2}\|A\|$ if and only if ${W(A)}$ is a circular disk with center at the origin and radius $\frac{1}{2}\|A\|$.\\
	(ii) $w(A)=\frac{1}{2}\sqrt{\|A^*A+AA^*\|}$ if and only if ${W(A)}$ is a circular disk with center at the origin and radius $\frac{1}{2}\sqrt{\|A^*A+AA^*\|}$.
\end{theorem}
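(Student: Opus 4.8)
\textbf{Proof proposal for Theorem \ref{prop12}.}

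The plan is to deduce both statements directly from Lemma \ref{thapp}, which translates the condition ``$\|\Re(e^{\rm i\theta}A)\|$ is independent of $\theta$'' into ``$W(A)$ is a disk centered at the origin'', together with the already-established equivalences in Theorem \ref{theorem1} and Theorem \ref{theorem10}.

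For part (i): if $w(A)=\frac{1}{2}\|A\|$, then by Theorem \ref{theorem1} we have $\|\Re(e^{\rm i\theta}A)\|=\frac{1}{2}\|A\|$ for all $\theta\in\mathbb{R}$, so Lemma \ref{thapp} (with $k=\frac{1}{2}\|A\|$) gives that $W(A)$ is a circular disk centered at the origin with radius $\frac{1}{2}\|A\|$. Conversely, if $W(A)$ is such a disk, then Lemma \ref{thapp} yields $\|\Re(e^{\rm i\theta}A)\|=\frac{1}{2}\|A\|$ for every $\theta$; taking the supremum over $\theta$ and using $w(A)=\sup_{\theta}\|\Re(e^{\rm i\theta}A)\|$ gives $w(A)=\frac{1}{2}\|A\|$. (One should also note that in the disk case $w(A)$ equals the radius, which is $\frac{1}{2}\|A\|$, so the identification of the radius is automatic.)

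For part (ii): if $w(A)=\frac{1}{2}\sqrt{\|A^*A+AA^*\|}$, i.e. $w^2(A)=\frac{1}{4}\|A^*A+AA^*\|$, then by Theorem \ref{theorem10} we have $\|\Re(e^{\rm i\theta}A)\|^2=\frac{1}{4}\|A^*A+AA^*\|$ for all $\theta$, hence $\|\Re(e^{\rm i\theta}A)\|=\frac{1}{2}\sqrt{\|A^*A+AA^*\|}$ is constant in $\theta$; Lemma \ref{thapp} then shows $W(A)$ is a disk centered at the origin with radius $\frac{1}{2}\sqrt{\|A^*A+AA^*\|}$. Conversely, if $W(A)$ is such a disk, Lemma \ref{thapp} gives $\|\Re(e^{\rm i\theta}A)\|=\frac{1}{2}\sqrt{\|A^*A+AA^*\|}$ for all $\theta$, and taking the supremum over $\theta$ yields $w(A)=\frac{1}{2}\sqrt{\|A^*A+AA^*\|}$.

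Since all the substantive work has been done in Lemma \ref{thapp}, Theorem \ref{theorem1}, and Theorem \ref{theorem10}, the only genuine point requiring care is the bookkeeping of the radius: one must check in each direction that the constant $k$ appearing in Lemma \ref{thapp} matches $\frac{1}{2}\|A\|$ (resp.\ $\frac{1}{2}\sqrt{\|A^*A+AA^*\|}$), which follows because $w(A)=\sup_\theta\|\Re(e^{\rm i\theta}A)\|$ equals the radius of the disk $W(A)$. I do not anticipate any real obstacle here; the theorem is essentially a repackaging of the preceding results.
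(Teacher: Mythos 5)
Your proposal is correct and follows exactly the route the paper intends: the paper's own proof is the single remark that the theorem ``follows easily from Theorem \ref{theorem1} and Theorem \ref{theorem10}'' together with Lemma \ref{thapp}, which is precisely the combination you spell out. Your write-up merely makes explicit the bookkeeping of the radius that the paper leaves implicit.
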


\begin{remark}
	For $A \in \mathcal{B}(\mathcal{H})$, here we note that  $w(A)=\frac{\|A\|}{2}$ implies $w(A)=\frac{1}{2}\sqrt{\|A^*A+AA^*\|}$. However, $w(A)=\frac{1}{2}\sqrt{\|A^*A+AA^*\|}$ does not always imply  $w(A)=\frac{\|A\|}{2}$. Consider  $A=\left(\begin{array}{ccc}
	0&1&0\\
	0&0&1\\
	0&0&0
	\end{array}   \right)$. Then we have, $w(A)=\frac{1}{2}\sqrt{\|A^*A+AA^*\|}=\frac{1}{\sqrt{2}}>\frac{1}{2}= \frac{\|A\|}{2}$.
	
\end{remark}

Our final inequality in this section is as follows.

\begin{theorem}\label{th4}
Let $A \in \mathcal{B}(\mathcal{H})$, then
\begin{eqnarray*}
	w^4(A)&\geq& \frac{1}{16} \left \| (A^*A+AA^*)^2+4\left (\Re(A^2) \right )^2 \right\| +  \frac{1}{2}\left|\|\Re(A)\|^4-\|\Im(A)\|^4\right|  \\
	&\geq& \frac{1}{16}\|A^*A+AA^*\|^2+\frac{1}{4}c\left(\left (\Re(A^2)\right )^2\right)+\frac{1}{2}\left|\|\Re(A)\|^4-\|\Im(A)\|^4 \right|. 
\end{eqnarray*}	
\end{theorem}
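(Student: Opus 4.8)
The plan is to mimic the proofs of Theorems~\ref{corr} and \ref{th1}, pushed one power higher. First I fix a unit vector $x\in\mathcal{H}$ and recall from the Cartesian decomposition that $|\langle Ax,x\rangle|^2=|\langle \Re(A)x,x\rangle|^2+|\langle \Im(A)x,x\rangle|^2$; this yields $w(A)\geq\|\Re(A)\|$ and $w(A)\geq\|\Im(A)\|$, hence $w^4(A)\geq\max\{\|\Re(A)\|^4,\|\Im(A)\|^4\}$. Applying the elementary identity $\max\{a,b\}=\frac{a+b}{2}+\frac{|a-b|}{2}$ gives
\[
w^4(A)\geq\frac{\|\Re(A)\|^4+\|\Im(A)\|^4}{2}+\frac{\big|\,\|\Re(A)\|^4-\|\Im(A)\|^4\,\big|}{2}.
\]
Since $(\Re(A))^2$ and $(\Im(A))^2$ are positive operators, $\|\Re(A)\|^4=\|(\Re(A))^4\|$ and $\|\Im(A)\|^4=\|(\Im(A))^4\|$, so the triangle inequality bounds the first summand below by $\frac{1}{2}\big\|(\Re(A))^4+(\Im(A))^4\big\|$.

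The computational heart of the argument is an algebraic identity. Writing $P=\Re(A)$ and $Q=\Im(A)$, one has $P^2+Q^2=\tfrac{1}{2}(A^*A+AA^*)$ and $P^2-Q^2=\Re(A^2)$, so that
\[
P^4+Q^4=\tfrac{1}{2}\big[(P^2+Q^2)^2+(P^2-Q^2)^2\big]=\tfrac{1}{8}\big[(A^*A+AA^*)^2+4(\Re(A^2))^2\big].
\]
Substituting this into the bound above produces precisely the first inequality of the theorem.

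For the second inequality I would use that $(A^*A+AA^*)^2$ and $(\Re(A^2))^2$ are both positive operators, together with the elementary monotonicity estimate $\|S+T\|\geq\|S\|+c(T)$ valid whenever $S,T$ are positive (obtained by taking the supremum over unit vectors in $\langle(S+T)x,x\rangle=\langle Sx,x\rangle+\langle Tx,x\rangle\geq\langle Sx,x\rangle+c(T)$). This gives $\big\|(A^*A+AA^*)^2+4(\Re(A^2))^2\big\|\geq\|A^*A+AA^*\|^2+4\,c\big((\Re(A^2))^2\big)$, and dividing by $16$ finishes the proof. I do not anticipate any serious obstacle: the argument is routine once one spots the identity $(P^2+Q^2)^2+(P^2-Q^2)^2=2(P^4+Q^4)$; the only steps requiring a touch of care are the positivity facts $\|\Re(A)\|^4=\|(\Re(A))^4\|$ and the monotonicity estimate $\|S+T\|\geq\|S\|+c(T)$ for positive $S,T$.
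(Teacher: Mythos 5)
Your proposal is correct and follows essentially the same route as the paper's proof: the same passage from $w^4(A)\geq\max\{\|\Re(A)\|^4,\|\Im(A)\|^4\}$ via the max identity and triangle inequality to $\tfrac12\|(\Re(A))^4+(\Im(A))^4\|$, the same algebraic identity converting this to $\tfrac1{16}\|(A^*A+AA^*)^2+4(\Re(A^2))^2\|$, and the same positivity estimate for the final step. You simply make explicit the computations (the identity $(P^2+Q^2)^2+(P^2-Q^2)^2=2(P^4+Q^4)$ and the bound $\|S+T\|\geq\|S\|+c(T)$ for positive $S,T$) that the paper leaves implicit.
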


\begin{proof}
	It follows from $w^4(A)\geq \max\{\|\Re(A)\|^4,\|\Im(A)\|^4\}$ that 
	\begin{eqnarray*}
		w^4(A)&\geq& \frac{ \| \Re(A) \|^4+ \| \Im(A) \|^4}{2}+ \frac{\big|~~\|\Re(A)\|^4-\|\Im(A)\|^4 ~~ \big|}{2}\\
		&\geq& \frac{ \|( \Re(A))^4 +( \Im(A) )^4\|}{2}+ \frac{\big|~~\|\Re(A)\|^4-\|\Im(A)\|^4 ~~ \big|}{2}\\
		&=&  \frac{1}{16} \left \| (A^*A+AA^*)^2+4\left (\Re(A^2) \right )^2 \right\| +  \frac{\left|~~\|\Re(A)\|^4-\|\Im(A)\|^4~~ \right|}{2}  \\
		&\geq& \frac{1}{16}\|A^*A+AA^*\|^2+\frac{1}{4}c\left(\left (\Re(A^2)\right )^2\right)+\frac{\left|~~\|\Re(A)\|^4-\|\Im(A)\|^4~~ \right|}{2}.
	\end{eqnarray*}
\end{proof}

\begin{remark}
 Bag et al. \cite[Th. 8]{S1} obtained that the following inequality:
\begin{eqnarray}\label{bag5}
	w^4(A)
	&\geq& \frac{1}{16}\|A^*A+AA^*\|^2+\frac{1}{4}c\left(\left (\Re(A^2)\right )^2\right).
\end{eqnarray}
Bhunia et al. \cite[Cor. 2.8]{P1}  improved on the inequality (\ref{bag5}) to proved that 
\begin{eqnarray}\label{bhuniaetal}
	w^4(A)&\geq& \frac{1}{16} \left \| (A^*A+AA^*)^2+4\left (\Re(A^2) \right )^2 \right\|.
\end{eqnarray}
Clearly, Theorem \ref{th4} improves on both the inequalities (\ref{bag5}) and (\ref{bhuniaetal}).
\end{remark}


\section{Application to estimate numerical radius of commutators of operators}

In this section we obtain upper bounds for the numerical radius of commutators of bounded linear operators as an application of the lower bounds in the previous section. First we prove the following theorem.

\begin{theorem}\label{th2}
	Let $A,B,X,Y\in \mathcal{B}(\mathcal{H})$, then 
	\begin{eqnarray*}
		w(AXB \pm BYA) \leq 2\sqrt{2}\|B\|\max  \left\{\|X\|,\|Y\| \right\}\sqrt{ w^2(A)-\nu   },
	\end{eqnarray*}
where $\nu=\frac{c^2(\Re (A))+c^2(\Im (A))}{2}+\left|  \frac{\|\Re(A)\|^2-\|\Im(A)\|^2}{2} + \frac{c^2(\Im(A))- c^2(\Re(A))}{2}   \right|$.
	
\end{theorem}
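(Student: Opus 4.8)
The plan is to reduce the commutator bound to a numerical-radius estimate for a block operator and then invoke Theorem \ref{th1}. First I would recall the standard observation that for $A,B,X,Y\in\mathcal{B}(\mathcal{H})$ one can write $AXB\pm BYA$ as a compression of a $2\times 2$ operator matrix built from $A$, $X$, $Y$; concretely, if one sets
\[
T=\begin{pmatrix} 0 & A\\ A & 0\end{pmatrix},\qquad S_{\pm}=\begin{pmatrix} 0 & X\\ \pm Y & 0\end{pmatrix},\qquad R=\begin{pmatrix} B & 0\\ 0 & B\end{pmatrix},
\]
then $T S_{\pm} T$ has $AXB\pm BYA$ sitting in one of its off-diagonal corners after conjugating by $R$ appropriately, so that $w(AXB\pm BYA)\le w(R\, T S_{\pm} T)$ up to the right bookkeeping. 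A cleaner route, and the one I would actually follow, is to use the elementary fact that for $P,Q\in\mathcal{B}(\mathcal{H})$ one has $w(PQ+QP)\le 2\,w(P)\,\|Q\|$ type inequalities together with $\|RTS_{\pm}T\|$-style norm factorizations; the key submultiplicativity input is $\|AXB\|\le\|B\|\,\|X\|\,\|A\|$ and the crucial spectral input is the sharpened lower bound $w^2(A)\ge \frac14\|A^*A+AA^*\|+\nu$ from Theorem \ref{th1}, which rearranges to $\|A^*A+AA^*\|\le 4\bigl(w^2(A)-\nu\bigr)$.

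The second step is to control $\|A\|^2$ (which is what naturally appears after applying submultiplicativity to $AXB$ and $BYA$) by $\|A^*A+AA^*\|$. This is immediate since $\|A\|^2=\|A^*A\|\le\|A^*A+AA^*\|$ because $A^*A$ and $AA^*$ are positive and the norm of a sum of positives dominates the norm of each summand. Combining with Theorem \ref{th1} gives $\|A\|^2\le 4\bigl(w^2(A)-\nu\bigr)$, i.e. $\|A\|\le 2\sqrt{w^2(A)-\nu}$. Then a direct triangle-inequality estimate
\[
w(AXB\pm BYA)\le \|AXB\pm BYA\|\le \|AXB\|+\|BYA\|\le \|B\|\,\|A\|\bigl(\|X\|+\|Y\|\bigr)
\]
combined with $\|X\|+\|Y\|\le 2\max\{\|X\|,\|Y\|\}$ and $\|A\|\le 2\sqrt{w^2(A)-\nu}$ would yield a bound of the shape $4\|B\|\max\{\|X\|,\|Y\|\}\sqrt{w^2(A)-\nu}$, which is weaker than the claimed $2\sqrt2$ constant. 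So the naive triangle inequality loses a factor of $\sqrt2$, and recovering the sharp constant $2\sqrt2$ is where the real work lies.

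To get the $2\sqrt2$, the plan is to not bound $w(AXB\pm BYA)$ by its operator norm directly but to exploit that the numerical radius of an off-diagonal operator matrix $\begin{pmatrix}0&U\\ V&0\end{pmatrix}$ equals $\tfrac12\bigl(\|U\|+\|V\|\bigr)$ only in special cases but is always $\le\tfrac12(\|U+V^*\| + \|U - V^*\|)$-type expressions; more usefully, $w\!\left(\begin{pmatrix}0&U\\V&0\end{pmatrix}\right)\le\tfrac{1}{2}\bigl(\|U\|+\|V\|\bigr)$ fails in general but $w$ of such a matrix is $\le \max\{\ldots\}$. The honest approach: realize $AXB\pm BYA$ as $\tfrac12$ times an expression to which the mixed Schwarz / generalized power inequality applies, using $w(AZ\pm ZA)\le 2\|Z\|\,w(A)$-flavored results, and then replace the crude $w(A)$ by $\sqrt{w^2(A)-\nu}$ via Theorem \ref{th1} after passing through $\|A^2\|$ or $\|A^*A+AA^*\|$. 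I expect the main obstacle to be precisely this constant-tracking: arranging the algebraic identity so that it is $\|A^*A+AA^*\|$ (not $4\|A\|^2$) that enters, thereby saving the factor $\sqrt2$, and then quoting Theorem \ref{th1} to convert $\tfrac14\|A^*A+AA^*\|$ into $w^2(A)-\nu$. Once the correct intermediate quantity is identified, the remaining estimates are routine applications of submultiplicativity of the operator norm and the inequality $\|X\|+\|Y\|\le2\max\{\|X\|,\|Y\|\}$.
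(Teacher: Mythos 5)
You correctly identify the endgame — rearranging Theorem \ref{th1} to $\|A^*A+AA^*\|\le 4\bigl(w^2(A)-\nu\bigr)$ — and you correctly diagnose that the naive route via $w(\cdot)\le\|\cdot\|$ and the triangle inequality only yields the constant $4$. But the proposal never actually supplies the mechanism that produces $\sqrt{2\|A^*A+AA^*\|}$; the final paragraph is a list of candidate strategies (block operator matrices, mixed Schwarz, ``$w(AZ\pm ZA)\le 2\|Z\|w(A)$-flavored results'') with an explicit admission that the constant-tracking is unresolved. That unresolved step is the entire content of the theorem, so this is a genuine gap rather than a routine omission.

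The missing idea is an elementary pointwise estimate, not an operator-matrix identity. After normalizing so that $\|X\|\le 1$ and $\|Y\|\le 1$, write for a unit vector $x$
\begin{eqnarray*}
|\langle (AX\pm YA)x,x\rangle| &\le& |\langle Xx,A^*x\rangle|+|\langle Ax,Y^*x\rangle| \;\le\; \|A^*x\|+\|Ax\|\\
&\le& \sqrt{2\left(\|A^*x\|^2+\|Ax\|^2\right)} \;=\; \sqrt{2\,\langle (AA^*+A^*A)x,x\rangle} \;\le\; \sqrt{2\,\|AA^*+A^*A\|}.
\end{eqnarray*}
The scalar inequality $a+b\le\sqrt{2(a^2+b^2)}$ is exactly where the factor $\sqrt2$ is saved, and the identity $\|A^*x\|^2+\|Ax\|^2=\langle(AA^*+A^*A)x,x\rangle$ is exactly how $\|A^*A+AA^*\|$ (rather than $\|A\|^2$) enters. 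Taking the supremum over unit $x$ gives $w(AX\pm YA)\le 2\sqrt2\,\sqrt{w^2(A)-\nu}$ by Theorem \ref{th1}; the operator $B$ is then inserted at the very end by substituting $X\mapsto XB$ and $Y\mapsto BY$ and using $\|XB\|\le\|X\|\|B\|$, $\|BY\|\le\|B\|\|Y\|$ — not by a submultiplicative estimate on $\|AXB\|$ up front, which is what forces you into the lossy $\|A\|$-based bound.
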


\begin{proof}
	First we assume that $\|X\|\leq 1$ and $\|Y\|\leq 1$. Let $x$ be an unit vector in ${\mathcal{H}}$. Then we have,
	\begin{eqnarray*}
		|\langle (AX\pm YA)x,x\rangle|&\leq&  |\langle Xx,A^*x\rangle|+|\langle Ax,Y^*x\rangle| \\
		&\leq& \|A^*x\|+ \|Ax\|,~~\|Xx\|\leq 1~~\textit{and}~~ \|Y^*x\|\leq 1 \\ 
		&\leq &  \sqrt{2(\|A^*x\|^2+ \|Ax\|^2)}\\ 
		&\leq&  \sqrt{2\|AA^*+A^*A\|}\\
		&\leq& 2\sqrt{2}\sqrt{ w^2(A)-\nu  }, ~~\textit{using Theorem \ref{th1}},
	\end{eqnarray*}
 where $\nu=\frac{c^2(\Re (A))+c^2(\Im (A))}{2}+\left|  \frac{\|\Re(A)\|^2-\|\Im(A)\|^2}{2} + \frac{c^2(\Im(A))- c^2(\Re(A))}{2}   \right|$.
Hence, by taking supremum over $\|x\|=1$ we get,
	\begin{eqnarray}\label{eqnth1}
	w(AX\pm YA)&\leq& 2\sqrt{2}\sqrt{ w^2(A)-\nu }.
	\end{eqnarray}
	Now we consider the general case, i.e., $X,Y\in \mathcal{B}(\mathcal{H})$ be arbitrary operators. If $X=Y=0$ then Theorem \ref{th2} holds trivially. Let $\max  \left\{\|X\|,\|Y\| \right\}\neq 0.$ Then clearly $\left \| \frac{X}{\max  \left\{\|X\|,\|Y\| \right\}}\right\|\leq 1$ and $\left \| \frac{Y}{\max  \left\{\|X\|,\|Y\| \right\}}\right\|\leq 1$. So,  replacing $X$ and $Y$ by $\frac{X}{\max  \left\{\|X\|,\|Y\| \right\}}$ and $\frac{Y}{\max  \left\{\|X\|,\|Y\| \right\}}$, respectively, in (\ref{eqnth1}) we get,
	\begin{eqnarray}\label{eqnth2}
	w(AX\pm YA)\leq 2\sqrt{2}\max  \left\{\|X\|,\|Y\| \right\}\sqrt{ w^2(A)-\nu }.
	\end{eqnarray}
	Now replacing $X$ by $XB$ and $Y$ by $BY$ in (\ref{eqnth2}) we get,
	\begin{eqnarray*}
		w(AXB\pm BYA)\leq 2\sqrt{2}\max  \left\{\|XB\|,\|BY\| \right\}\sqrt{ w^2(A)-\nu }.
	\end{eqnarray*}
	This implies that
	\begin{eqnarray*}
		w(AXB\pm BYA) \leq 2\sqrt{2} \|B\| \max  \left\{\|X\|,\|Y\| \right\}\sqrt{ w^2(A)-\nu },
	\end{eqnarray*} as desired.
\end{proof}

Considering $X=Y=I$ in Theorem \ref{th2}, we get the following inequality.

\begin{corollary}\label{corth2}
	Let $A,B\in \mathcal{B}(\mathcal{H})$, then
	\begin{eqnarray}\label{eqncor1}
w(AB\pm BA) &\leq & 2\sqrt{2} \|B\| \sqrt{ w^2(A)-\nu },
	\end{eqnarray}
	where $\nu=\frac{c^2(\Re (A))+c^2(\Im (A))}{2}+\left|  \frac{\|\Re(A)\|^2-\|\Im(A)\|^2}{2} + \frac{c^2(\Im(A))- c^2(\Re(A))}{2}   \right|$.

\end{corollary}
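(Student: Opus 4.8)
The plan is to obtain Corollary~\ref{corth2} as an immediate specialization of Theorem~\ref{th2}. Setting $X = Y = I$, the identity operator on $\mathcal{H}$, one has $\|X\| = \|Y\| = \|I\| = 1$, so that $\max\{\|X\|,\|Y\|\} = 1$, while the expression $AXB \pm BYA$ collapses to $AB \pm BA$. Substituting these choices into the conclusion of Theorem~\ref{th2} yields at once
\[
w(AB \pm BA) \;\leq\; 2\sqrt{2}\,\|B\|\,\sqrt{\,w^2(A) - \nu\,},
\]
with exactly the same $\nu$ as in Theorem~\ref{th2}. Since Theorem~\ref{th2} is already available, this is the whole argument; there is essentially no obstacle, because the substantive work — the factor $\sqrt{2}$ coming from the Cauchy--Schwarz and parallelogram estimates, the bound $\|A^*A + AA^*\| \le 4\bigl(w^2(A)-\nu\bigr)$ furnished by Theorem~\ref{th1}, and the homogeneity trick that removes the normalization $\|X\|,\|Y\| \le 1$ — has all been carried out in the proof of Theorem~\ref{th2}.

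If a self-contained derivation is preferred, the same chain of estimates can be run directly. For a unit vector $x \in \mathcal{H}$ one has
\[
|\langle (AB \pm BA)x,x\rangle| \leq |\langle Bx, A^*x\rangle| + |\langle Ax, B^*x\rangle| \leq \|B\|\bigl(\|A^*x\| + \|Ax\|\bigr) \leq \|B\|\sqrt{2\bigl(\|A^*x\|^2 + \|Ax\|^2\bigr)},
\]
and $\|A^*x\|^2 + \|Ax\|^2 = \langle (A^*A + AA^*)x,x\rangle \leq \|A^*A+AA^*\|$. Taking the supremum over $\|x\| = 1$ and invoking Theorem~\ref{th1} in the form $\|A^*A+AA^*\| \leq 4\bigl(w^2(A) - \nu\bigr)$ gives the displayed inequality. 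The one point worth emphasizing is that this $\nu$ is precisely the quantity appearing in Theorem~\ref{th1}, so no new constant is introduced; in particular, whenever $\nu > 0$ the corollary strictly sharpens the elementary bound $w(AB \pm BA) \le 2\sqrt{2}\,\|B\|\,w(A)$.
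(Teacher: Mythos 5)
Your proposal is correct and follows the paper's own route exactly: the paper likewise obtains the corollary by setting $X=Y=I$ in Theorem \ref{th2}, and your optional self-contained chain of estimates reproduces the argument inside the proof of that theorem. Nothing further is needed.
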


\begin{remark}
Fong and Holbrook \cite{FH} obtained that the following inequality 
\begin{eqnarray}\label{Fong}
w(AB+ BA) \leq  2\sqrt{2} \|B\| w(A).
\end{eqnarray}
Hirzallah and Kittaneh \cite{HK} improved on the inequality (\ref{Fong}) to prove that
\begin{eqnarray}\label{Hirzallah}
w(AB \pm BA)&\leq& 2\sqrt{2}\|B\|\sqrt{ w^2(A)-\frac{|~~\|\Re (A)\|^2-\|\Im (A)\|^2~~|}{2} }.
\end{eqnarray}	
We would like to remark that the inequality obtained in Corollary \ref{corth2} is a proper refinement of that in  (\ref{Hirzallah}). Consider $A=\left(\begin{array}{cc}
20 & 0 \\
0 & 30+ 30 \,{\rm i}
\end{array}\right)$ and $B=\left(\begin{array}{cc}
1 & 0 \\
0 & -1
\end{array}\right)$. Then   Corollary \ref{corth2} gives $w(AB \pm BA)\leq 105.830052$, whereas (\ref{Hirzallah}) gives $w(AB \pm BA)\leq 120.$

\end{remark}

Proceeding similarly as in Corollary \ref{corth2} and using Theorem \ref{th4}, we get the following inequality.

\begin{corollary}\label{corth3}
	Let $A,B\in \mathcal{B}(\mathcal{H})$, then
	\begin{eqnarray*}
		&&	w(AB\pm BA) \\
		&&\leq  2\sqrt{2} \|B\| \left({ w^4(A)-\frac{1}{4}c\left(\left (\Re(A^2)\right )^2\right)-\frac{\left|~~\|\Re(A)\|^4-\|\Im(A)\|^4~~ \right|}{2} }\right)^{\frac{1}{4}}\\
		& &\leq 2\sqrt{2} \|B\| \left({ w^4(A)-\frac{\left|~~\|\Re(A)\|^4-\|\Im(A)\|^4~~ \right|}{2} }\right)^{\frac{1}{4}}.
	\end{eqnarray*}

\end{corollary}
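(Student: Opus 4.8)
The plan is to derive Corollary~\ref{corth3} from Theorem~\ref{th4} by exactly mirroring the argument used for Corollary~\ref{corth2}, simply replacing the role of Theorem~\ref{th1} by Theorem~\ref{th4}. First I would treat the normalized case $\|X\|\le 1$, $\|Y\|\le 1$ (here with $X=Y=I$ ultimately, but the general $X,Y$ scaling step is harmless and uniform): for a unit vector $x\in\mathcal H$, bound $|\langle(AX\pm YA)x,x\rangle|\le \|A^*x\|+\|Ax\|\le\sqrt{2(\|A^*x\|^2+\|Ax\|^2)}=\sqrt{2\langle(A^*A+AA^*)x,x\rangle}\le\sqrt{2\|A^*A+AA^*\|}$, then take the supremum over unit vectors to get $w(AX\pm YA)\le 2\sqrt2\,\sqrt{\tfrac14\|A^*A+AA^*\|}\le 2\sqrt2\bigl(\tfrac1{16}\|A^*A+AA^*\|^2\bigr)^{1/4}$. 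At this last step I would invoke the two inequalities of Theorem~\ref{th4}, which say that $\tfrac1{16}\|A^*A+AA^*\|^2$ is bounded above by $w^4(A)-\tfrac14 c\bigl((\Re(A^2))^2\bigr)-\tfrac12\bigl|\,\|\Re(A)\|^4-\|\Im(A)\|^4\,\bigr|$, which in turn is bounded above by $w^4(A)-\tfrac12\bigl|\,\|\Re(A)\|^4-\|\Im(A)\|^4\,\bigr|$.

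Next I would run the standard homogenization: for arbitrary $X,Y$ with $m:=\max\{\|X\|,\|Y\|\}\ne 0$, apply the normalized inequality to $X/m$ and $Y/m$ to obtain $w(AX\pm YA)\le 2\sqrt2\,m\,\bigl(w^4(A)-\tfrac14 c((\Re(A^2))^2)-\tfrac12|\,\|\Re(A)\|^4-\|\Im(A)\|^4\,|\bigr)^{1/4}$, and then replace $X$ by $XB$ and $Y$ by $BY$, using $\max\{\|XB\|,\|BY\|\}\le\|B\|\max\{\|X\|,\|Y\|\}$, to reach the commutator form. Specializing to $X=Y=I$ collapses $AXB\pm BYA$ to $AB\pm BA$ and $\max\{\|X\|,\|Y\|\}$ to $1$, yielding both displayed inequalities of the corollary at once.

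I do not expect any genuine obstacle here: the two chained inequalities are precisely the content of Theorem~\ref{th4}, and every other step is the verbatim mechanism of the proof of Theorem~\ref{th2}/Corollary~\ref{corth2}. The only point requiring a word of care is the direction of monotonicity when passing from the norm bound $\sqrt{2\|A^*A+AA^*\|}$ to the fourth-root expression: one must check that $\tfrac14\|A^*A+AA^*\| = \bigl(\tfrac1{16}\|A^*A+AA^*\|^2\bigr)^{1/2} = \bigl(\tfrac1{16}\|A^*A+AA^*\|^2\bigr)^{1/4}\cdot\bigl(\tfrac1{16}\|A^*A+AA^*\|^2\bigr)^{1/4}$ — more simply, that $\sqrt{2\|A^*A+AA^*\|}=2\sqrt2\,\bigl(\tfrac1{16}\|A^*A+AA^*\|^2\bigr)^{1/4}$, which is an elementary identity — and then that $t\mapsto t^{1/4}$ is increasing so that Theorem~\ref{th4}'s lower bounds on $w^4(A)$ translate into the stated upper bounds on $w(AB\pm BA)$. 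Since all quantities under the fourth roots are manifestly nonnegative (they are lower bounds for $w^4(A)\ge 0$, and in fact equal $\tfrac1{16}\|A^*A+AA^*\|^2$ plus nonnegative terms in the relevant chain), the roots are well-defined and the argument goes through cleanly.
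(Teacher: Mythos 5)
Your proposal is correct and is exactly the argument the paper intends: the paper gives no separate proof, stating only that the corollary follows by ``proceeding similarly as in Corollary~\ref{corth2} and using Theorem~\ref{th4},'' and your write-up fills in precisely those steps (the bound $w(AX\pm YA)\le\sqrt{2\|A^*A+AA^*\|}=2\sqrt2\bigl(\tfrac1{16}\|A^*A+AA^*\|^2\bigr)^{1/4}$, the substitution of the two lower bounds from Theorem~\ref{th4}, and the homogenization/specialization to $X=Y=I$). The arithmetic identity and the monotonicity/nonnegativity checks you flag are all verified correctly.
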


\begin{remark}
	Clearly, Corollary \ref{corth3} is an  improvement of the inequality (\ref{Fong}), obtained in \cite{FH} .
\end{remark}


\subsection*{Acknowledgments} Mr. Pintu Bhunia would like to thank UGC, Govt. of India for the financial support in the form of SRF.


\begin{thebibliography}{1}
	
	
\bibitem {aok} A. Abu-Omar and F. Kittaneh, A generalization of the numerical radius, Linear Algebra  Appl. 569 (2019) 323-334.

\bibitem{BB} M. Barraa and M. Boumazgour, Inner derivations and norm equality, Proc. Amer.
Math. Soc. 130 (2002) 471-476.	
	
\bibitem{S1} S. Bag, P. Bhunia  and K. Paul, Bounds of numerical radius of bounded linear operator using $t$-Aluthge transform, Math. Inequal. Appl. 23(3) (2020) 991-1004.


\bibitem{P8} P. Bhunia, K. Paul and R.K. Nayak, Sharp inequalities for the numerical radius of Hilbert space operators and operator matrices, Math. Inequal. Appl. 24(1) (2021) 167-183. 	

\bibitem{P14}  P. Bhunia and K. Paul, New upper bounds for the numerical radius of Hilbert space operators, Bull. Sci. math. 167 (2021) 102959. {https://doi.org/10.1016/j.bulsci.2021.102959}

\bibitem{P1} P. Bhunia, S. Bag and K. Paul, Bounds for zeros of a polynomial using numerical radius of Hilbert space operators, Ann. Funct. Anal. 12, 21 (2021). {https://doi.org/10.1007/s43034-020-00107-4}

\bibitem {BP} P. Bhunia and K. Paul, Some improvements of numerical radius inequalities of operators and operator matrices, Linear Multilinear Algebra, (2020). {https://doi.org/10.1080/03081087.2020.1781037}


\bibitem{P2} P. Bhunia and K. Paul, Refinements of norm and numerical radius inequalities, Rocky Mountain J. Math. (2021), In press.


	
	
\bibitem{FH} C.K. Fong and J.A.R. Holbrook, Unitarily invariant operator norms, Canad. J. Math. 35(1983) 274-299.

	
\bibitem {GR} K.E. Gustafson and D.K.M. Rao, Numerical Range, The field of values of linear operators and matrices, Springer, New York, 1997.	
	
\bibitem{HK} O. Hirzallah and F. Kittaneh, Numerical radius inequalities for several operators, Math. Scand. 114(1) (2014) 110-119.
	
	
\bibitem {K05} F. Kittaneh, Numerical radius inequalities for Hilbert spaces operators, Studia Math. 168 (2005) 73-80.	

\bibitem {K03} F. Kittaneh, Numerical radius inequality and an estimate for the numerical radius of the Frobenius companion matrix, Studia Math. 158 (2003) 11-17.


\bibitem{MP} M. Marcus and C. Pesce, Computer generated numerical ranges and some resulting theorems, Linear Multilinear Algebra 20 (1987) 121-157.
	
	
\bibitem{TY} T. Yamazaki, On upper and lower bounds of the numerical radius and an equality condition, Studia Math. 178(1) (2007) 83-89.



\end{thebibliography}
\end{document}